\numberwithin{equation}{section}
\newcommand\nonu{\nonumber}
\newcommand\ZZ{\mathbb{Z}}
\newcommand\FSA{{\cal A}}
\newcommand\be\beta
\newcommand\ga\gamma
\newcommand\la\lambda
\newcommand\half{\frac12}
\newcommand\thalf{\tfrac12}
\newcommand{\qhyp}[5]{\,\mbox{}_{#1}\phi_{#2}\left(
  \genfrac{}{}{0pt}{}{#3}{#4};#5\right)}
\newcommand\LHS{left-hand side}
\newcommand\wt{\widetilde}
\newcommand\Asym{\FSA_{\rm sym}}
\newcommand\Dsym{D_{\rm sym}}
\newcommand\goH{\mathfrak{H}}
\newcommand\DAHA{\tilde\goH}
\newcommand\AWQ{\widetilde{AW}(3,Q_0)}
\newcommand\Psym{P_{\rm sym}}
\newtheorem{Theorem}{Theorem}[section]
\newtheorem{Lemma}[Theorem]{Lemma}
\newtheorem{Proposition}[Theorem]{Proposition}
\newtheorem{Remark}[Theorem]{Remark}
\newtheorem{Definition}[Theorem]{Definition}
\begin{document}

\renewcommand{\PaperNumber}{052}

\FirstPageHeading

\renewcommand{\thefootnote}{$\star$}

\ShortArticleName{Zhedanov's Algebra $AW(3)$. II}

\ArticleName{Zhedanov's Algebra $\boldsymbol{AW(3)}$  and the Double\\ Af\/f\/ine Hecke
Algebra  in the Rank One Case.\\
II. The Spherical Subalgebra\footnote{This
paper is a contribution to the Proceedings of the Seventh
International Conference ``Symmetry in Nonlinear Mathematical
Physics'' (June 24--30, 2007, Kyiv, Ukraine). The full collection
is available at
\href{http://www.emis.de/journals/SIGMA/symmetry2007.html}{http://www.emis.de/journals/SIGMA/symmetry2007.html}}}

\Author{Tom H. KOORNWINDER}

\AuthorNameForHeading{T.H. Koornwinder}

\Address{Korteweg-de Vries Institute, University of Amsterdam,\\
Plantage Muidergracht 24, 1018 TV Amsterdam, The Netherlands}
\Email{\href{mailto:thk@science.uva.nl}{thk@science.uva.nl}}
\URLaddress{\url{http://www.science.uva.nl/~thk/}}

\ArticleDates{Received November 15, 2007, in f\/inal form June 03,
2008; Published online June 10, 2008}

\Abstract{This paper builds on the previous paper by the author,
where a relationship
between Zhedanov's algebra $AW(3)$ and the double af\/f\/ine Hecke algebra
(DAHA)
corresponding to the Askey--Wilson polynomials was established.
It is shown here that the spherical subalgebra of this DAHA is
isomorphic to $AW(3)$ with an additional relation that the Casimir operator
equals an explicit constant.
A similar result with $q$-shifted parameters holds for the
antispherical subalgebra.
Some theorems on centralizers and centers for the
algebras under consideration will f\/inally be proved as corollaries of
the characterization of the spherical and antispherical subalgebra.}

\Keywords{Zhedanov's algebra $AW(3)$;
double af\/f\/ine Hecke algebra in rank one;
Askey--Wilson polynomials; spherical subalgebra}

\Classification{33D80}

\section{Introduction}
Zhedanov \cite{1} introduced in 1991 an algebra $AW(3)$
with three generators $K_0$, $K_1$, $K_2$ and
three relations in the form of $q$-commutators, which describes deeper
symmetries of the Askey--Wilson polynomials.
In the {\em basic representation} (or
{\em polynomial representation}) of $AW(3)$ on the space of symmetric
Laurent polynomials in $z$, $K_0$ acts as the
second order $q$-dif\/ference operator $\Dsym$ for which the
Askey--Wilson polynomials are eigenfunctions and $K_1$ acts as
multiplication by $z+z^{-1}$.
The Casimir operator $Q$ for $AW(3)$ becomes a scalar $Q_0$ in this
representation. Let $AW(3,Q_0)$ be $AW(3)$ with the additional relation
$Q=Q_0$. Then the basic representation $AW(3,Q_0)$ is faithful, see
\cite{17}.
There is a parameter changing anti-algebra isomorphism of $AW(3)$ which
interchanges $K_0$ and $K_1$, and hence interchanges $\Dsym$ and $z+z^{-1}$
in the basic representation. In the basic representation this {\em duality
isomorphism} can be realized by an integral transform having the
Askey--Wilson polynomial $P_n[z]$ as kernel
which maps symmetric Laurent polynomials
to inf\/inite sequences $\{c_n\}_{n=0,1,\ldots}$.

In 1992
Cherednik \cite{22} introduced double af\/f\/ine Hecke algebras associated
with root systems (DAHA's). This was the f\/irst of an important series
of papers by the same author, where a representation of the DAHA was
given in terms of $q$-dif\/ference-ref\/lection operators
($q$-analogues of Dunkl operators), joint eigenfunctions
of such operators were identif\/ied as non-symmetric Macdonald polynomials,
and Macdonald's conjectures for ordinary (symmetric) Macdonald polynomials
associated with root systems could be proved. The idea of nonsymmetric
polynomials was very fruitful, on the one hand as an important extension
of traditional harmonic analysis involving orthogonal systems of special
functions, on the other hand because the nonsymmetric point of view was
helpful for understanding the symmetric case better. For instance,
a duality anti-algebra isomorphism and shift operators (as studied
earlier by Opdam \cite{24} in the symmetric framework) occur naturally
in the non-symmetric context.

Related to Askey--Wilson polynomials the  DAHA of type $(C_1^\vee,C_1)$
(four parameters) was studied by Sahi \cite{10,25},
Noumi \& Stokman \cite{11}, and Macdonald \cite[Ch.~6]{5}.
See also the author's previous paper \cite{17}.
The same phenomena as described in the previous paragraph occur here, but
in a very explicit form. See for instance Remarks \ref{73} and \ref{74}
(referring to \cite{11}) about the duality anti-algebra isomorphism and
the shift operators, respectively.

In \cite{17} I also discussed how the algebra $AW(3,Q_0)$ is related to the
double af\/f\/ine Hecke algebra (DAHA) of type $(C_1^\vee,C_1)$.
In the basic (or polynomial) representation of this DAHA (denoted by $\DAHA$)
on the space of Laurent polynomials in $z$, the nonsymmetric
Askey--Wilson polynomials occur as eigenfunctions of a suitable element
$Y$ of $\DAHA$ (see \cite{10,11}, \cite[Ch.~6]{5}).
It turns out (see~\cite{17}) that a central extension $\AWQ$ of
$AW(3,Q_0)$ can be embedded as a subalgebra of $\DAHA$. As pointed out in
the present paper (see Remark \ref{73}), the duality anti-algebra isomorphisms
for $\AWQ$ and $\DAHA$ are compatible with this embedding.
\par
It would be interesting, also for possible generalisations to higher rank,
to have a more conceptual way of decribing the relationship between
Zhedanov's algebra and the double af\/f\/ine Hecke algebra.
The present paper establishes this by showing that the algebra
$AW(3,Q_0)$ is isomorphic to the spherical subalgebra of $\DAHA$.
The def\/inition of a {\em spherical subalgebra} of a
DAHA goes back to
Etingof \& Ginzburg \cite{18},
where a similar object was def\/ined in the context of Cherednik algebras,
see also \cite{19}. The def\/inition of spherical subalgebra for the
DAHA of type $(C_1^\vee,C_1)$
was given by Oblomkov \cite{20}. In general, the spherical subalgebra
of a DAHA $\DAHA$ is the algebra $\Psym\DAHA\Psym$, where
$\Psym$ is the symmetrizer idempotent in $\DAHA$.

The proof of the isomorphism of $AW(3,Q_0)$ with $\Psym\DAHA\Psym$
in Section~\ref{65} is
somewhat technical. It heavily uses the explicit relations given
in~\cite{17} for the algebras $AW(3,Q_0)$, $\AWQ$ and~$\DAHA$.
In Section~\ref{66} a similar isomorphism is proved between
$AW(3,Q_0)$ with two of its parameters $q$-shifted and
$\Psym^-\DAHA\Psym^-$, where $\Psym^-$ is the
antisymmetrizing idempotent in $\DAHA$.
In the f\/inal Section~\ref{67} it is shown as a corollary of these two
isomorphisms that $\AWQ$ is isomorphic with the centralizer of $T_1$ in
$\DAHA$. There it is also shown that the center of $AW(3,Q_0)$ is trivial,
with a proof in the same spirit as the proof of the faithfulness of
the basic representation of $AW(3,Q_0)$ given in \cite{17}.
Combination of the various results f\/inally gives as a corollary that
the center of $\DAHA$ is trivial.
\subsection*{Conventions}
Throughout assume that $q$ and $a$, $b$, $c$, $d$ are complex constants such that
\begin{gather}
q\ne0,\quad
q^m\ne1\ (m=1,2,\ldots),\quad
a,b,c,d\ne0,\quad
abcd\ne q^{-m}\ (m=0,1,2,\ldots).
\label{6}
\end{gather}
Let $e_1$, $e_2$, $e_3$, $e_4$ be the elementary symmetric polynomials in $a$, $b$, $c$, $d$:
\begin{gather}
e_1:=a+b+c+d,\qquad
e_2:=ab+ac+bc+ad+bd+cd,\nonumber\\
e_3:=abc+abd+acd+bcd,\qquad
e_4:=abcd.
\label{4}
\end{gather}
For Laurent polynomials $f$ in $z$ the $z$-dependence
will be written as $f[z]$.
Symmetric Laurent polynomials
$f[z]=\sum_{k=-n}^n c_k z^k$ (where $c_k=c_{-k}$) are related to ordinary
polynomials $f(x)$ in $x=\thalf(z+z^{-1})$ by
$f(\thalf(z+z^{-1}))=f[z]$.

\section{Summary of earlier results}

This section summarizes some results from \cite{17}, while Remarks \ref{75}
and \ref{73} about the duality anti-algebra isomorphism are new here.

\subsection[Askey-Wilson polynomials]{Askey--Wilson polynomials}

The {\em Askey--Wilson polynomials} are given by
\[
p_n\bigl(\thalf(z+z^{-1});a,b,c,d\mid q\bigr)
:=\frac{(ab,ac,ad;q)_n}{a^n}\,
\qhyp43{q^{-n},q^{n-1}abcd,az,az^{-1}}{ab,ac,ad}{q,q}
\]
(see \cite{2} for the def\/inition of $q$-shifted factorials $(a;q)_n$
and of $q$-hypergeometric series ${}_r\phi_s$).
These polynomials are symmetric in $a$, $b$, $c$, $d$.
We will work with the renormalized version which is {\em monic} as a
Laurent polynomial in $z$ (i.e., the coef\/f\/icient of $z^n$ equals 1):
\begin{gather}
P_n[z]=P_n[z;a,b,c,d\mid q]:=
\frac1{(abcdq^{n-1};q)_n}\,
p_n\bigl(\thalf(z+z^{-1});a,b,c,d\mid q\bigr).
\label{25}
\end{gather}
\par
The polynomials $P_n[z]$ are eigenfunctions of the operator $\Dsym$ acting
on the space $\Asym$
of symmetric Laurent polynomials $f[z]=f[z^{-1}]$:
\begin{gather}
(\Dsym f)[z]:=\frac{(1-az)(1-bz)(1-cz)(1-dz)}{(1-z^2)(1-qz^2)}\,
\bigl(f[qz]-f[z]\bigr)\nonumber\\
\phantom{(\Dsym f)[z]:=}{} +\frac{(a-z)(b-z)(c-z)(d-z)}{(1-z^2)(q-z^2)}\,
\bigl(f[q^{-1}z]-f[z]\bigr)
+(1+q^{-1}abcd)f[z].
\label{8}
\end{gather}
The eigenvalue equation is
\begin{gather}
\Dsym P_n=\la_nP_n,\qquad
\la_n:=q^{-n}+abcd q^{n-1}.
\label{22}
\end{gather}
Under condition \eqref{6} all eigenvalues in \eqref{22} are distinct.
\par
The three-term recurrence relation for the monic Askey--Wilson polynomials
is
\[
(z+z^{-1})P_n[z]=
P_{n+1}[z]+\be_n P_n[z]+\ga_n P_{n-1}[z]\qquad(n=0,1,2,\ldots),
\]
where $\be_n$ and $\ga_n$ are suitable constants and $P_{-1}[z]:=0$
(see \cite[(3.1.5)]{4}).

\subsection{Zhedanov's algebra}

{\em Zhedanov's algebra} $AW(3)$ (see \cite{1,21})
can in the $q$-case be described as an algebra with two generators
$K_0$, $K_1$ and with two relations
\begin{gather}
\label{1}
(q+q^{-1})K_1K_0K_1-K_1^2K_0-K_0K_1^2 =B\,K_1+ C_0\,K_0+D_0,\\
\label{2}
(q+q^{-1})K_0K_1K_0-K_0^2K_1-K_1K_0^2 =B\,K_0+C_1\,K_1+D_1.
\end{gather}
Here the {\em structure constants}
$B$, $C_0$, $C_1$, $D_0$, $D_1$ are f\/ixed complex constants.
\par
There is a {\em Casimir operator} $Q$ commuting with $K_0$, $K_1$:
\begin{gather}
Q=K_1K_0K_1K_0-(q^2+1+q^{-2})K_0K_1K_0K_1+(q+q^{-1})K_0^2K_1^2\nonumber\\
\phantom{Q=}{}+(q+q^{-1})(C_0K_0^2+C_1K_1^2)
+B\bigl((q+1+q^{-1})K_0K_1+K_1K_0\bigr)\nonumber\\
\phantom{Q=}{}+(q+1+q^{-1})(D_0K_0+D_1K_1).\label{3}
\end{gather}

Let the structure constants be expressed
in terms of $a$, $b$, $c$, $d$ by means of $e_1$, $e_2$, $e_3$, $e_4$
(see~\eqref{4}) as follows:
\begin{gather}
B :=(1-q^{-1})^2(e_3+qe_1),\nonumber\\
C_0 :=(q-q^{-1})^2,\nonumber\\
C_1 :=q^{-1}(q-q^{-1})^2 e_4,\label{16}\\
D_0 :=-q^{-3}(1-q)^2(1+q)(e_4+qe_2+q^2),\nonumber\\
D_1 :=-q^{-3}(1-q)^2(1+q)(e_1e_4+qe_3).\nonumber
\end{gather}
Then there is a representation (the {\em basic representation} or
{\em polynomial representation}) of the algebra
$AW(3)$ with structure constants \eqref{16}
on the space $\Asym$
of symmetric Laurent polynomials
as follows:
\begin{gather}
(K_0f)[z]:=(\Dsym f)[z],\qquad
(K_1f)[z]:=(Z+Z^{-1})f[z]:=(z+z^{-1})f[z],
\label{7}
\end{gather}
where $\Dsym$
is the operator \eqref{8} having the
{\em Askey--Wilson polynomials}
as eigenfunctions.
The Casimir operator $Q$ becomes constant in this representation:
\begin{gather}
(Qf)(z)=Q_0\,f(z),
\label{9}
\end{gather}
where
\begin{gather}
Q_0:=q^{-4}(1-q)^2\Bigl(q^4(e_4-e_2)+q^3(e_1^2-e_1e_3-2e_2)\nonumber\\
\phantom{Q_0:=}{} -q^2(e_2e_4+2e_4+e_2)
+q(e_3^2-2e_2e_4-e_1e_3)+e_4(1-e_2)\Bigr).
\label{10}
\end{gather}
(Note the slight error in this formula in \cite[(2.8)]{17}, version {\tt v3}.
It is corrected in {\tt v4}.)
\begin{Remark}\rm
\label{75}
Write $AW(3)=AW(3;K_0,K_1;a,b,c,d;q)$ in order to emphasize the dependence
of $AW(3)$ on the generators and the parameters (by \eqref{1},
\eqref{2}, \eqref{16}). There are several symmetries of this algebra.
First of all it is invariant under permutations of $a$, $b$, $c$, $d$.
The following one will be compatible with the duality of the double af\/f\/ine
Hecke algebra to be discussed below:

There is an anti-algebra isomorphism
\begin{gather}
AW(3;K_0,K_1;a,b,c,d;q)\to
AW\Biggl(3;a K_1,(q^{-1}abcd)^{-\half}K_0;\nonumber\\
\phantom{AW(3;K_0,K_1;a,b,c,d;q)\to}{} \frac1{(q^{-1}abcd)^\half},
\frac{ab}{(q^{-1}abcd)^\half},\frac{ac}{(q^{-1}abcd)^\half},
\frac{ad}{(q^{-1}abcd)^\half}\,;q\Biggr).\!\!\!
\label{70}
\end{gather}
See \cite[\S~2]{1}. Under this mapping $Q$ is sent to
$qa(bcd)^{-1}Q$ and $Q_0$ to $qa(bcd)^{-1}Q_0$.

Note that there is also a trivial anti-algebra isomorphism
\[
AW(3;K_0,K_1;a,b,c,d;q)\to AW(3;K_0,K_1;a,b,c,d;q).
\]
This sends $Q$ to $Q$ and $Q_0$ to $Q_0$.

There is also an algebra isomorphism
\[
AW(3;K_0,K_1;a,b,c,d;q)\to AW\left(3;\frac q{abcd}\,K_0,K_1;
a^{-1},b^{-1},c^{-1},d^{-1};q^{-1}\right).
\]
This sends $Q$ to $q^2(abcd)^{-2}Q$ and $Q_0$ to $q^2(abcd)^{-2}Q_0$.
\end{Remark}

Let $AW(3,Q_0)$ be the algebra generated by $K_0,K_1$ with relations
\eqref{1}, \eqref{2} and
\mbox{$Q=Q_0$},
assuming the structure constants \eqref{16}.
Then the basic representation of $AW(3)$ is also a representation of
$AW(3,Q_0)$.
We have the following theorem (see \cite[Theorem 2.2]{17}):
\begin{Theorem}
\label{12}
The elements
\[
K_0^n(K_1K_0)^lK_1^m\qquad(m,n=0,1,2,\ldots,\;l=0,1)
\]
form a basis of $AW(3,Q_0)$ and the representation
\eqref{7} of $AW(3,Q_0)$ is faithful.
\end{Theorem}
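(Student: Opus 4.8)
The plan is to prove two claims simultaneously: that the listed monomials $K_0^n(K_1K_0)^lK_1^m$ span $AW(3,Q_0)$, that they are linearly independent, and that the basic representation is faithful. The standard strategy for such a basis result is a diamond-lemma / PBW-type argument, where the relations are read as rewriting rules that reduce an arbitrary word in $K_0,K_1$ to a linear combination of the proposed normal-form monomials.

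First I would establish the \textbf{spanning} property. I would treat relations \eqref{1} and \eqref{2} as rewriting rules. Solving \eqref{1} for the term $K_0K_1^2$ (the highest monomial with $K_0$ on the left and two $K_1$'s) and \eqref{2} for $K_1K_0^2$, one obtains rules that let one reduce any occurrence of $K_0$ followed by two or more $K_1$'s, and any $K_1$ followed by two or more $K_0$'s. Iterating, every word reduces to a linear combination of monomials of the form $K_0^n(K_1K_0)^lK_1^m$; the crucial bookkeeping is that the factor $(q+q^{-1})K_1K_0K_1$ on the \LHS\ of \eqref{1} collapses any long alternating factor down to a single interior $K_1K_0$, so $l$ can be forced into $\{0,1\}$. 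The relation $Q=Q_0$ is then used to eliminate the degree-four obstruction: the leading term of $Q$ in \eqref{3} is $K_1K_0K_1K_0$ (equivalently $(K_1K_0)^2$), so $Q=Q_0$ provides exactly the one extra rule needed to keep $l\le 1$ and prevent a two-parameter family $(K_1K_0)^l$ from surviving. This shows the monomials span.

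For \textbf{linear independence and faithfulness} together, I would invoke the basic representation \eqref{7}. It suffices to show that the images of the monomials $K_0^n(K_1K_0)^lK_1^m$ under \eqref{7} are linearly independent operators on $\Asym$, since linear independence of the images forces both linear independence in the abstract algebra and faithfulness of the representation in one stroke. Here the two operators have complementary gradings: $K_1=Z+Z^{-1}$ raises the degree of a symmetric Laurent polynomial by one, while $K_0=\Dsym$ preserves degree (it is diagonalized by the $P_n$ with distinct eigenvalues $\la_n$ by \eqref{22}). Applying a candidate null combination to the basis $\{P_n\}$ and tracking the top-degree contribution, the raising action of $K_1^m$ separates monomials by their $m$-value, while within a fixed $m$ the distinctness of the eigenvalues $\la_n$ (guaranteed by \eqref{6}) separates the $K_0$-powers. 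A careful degree-and-eigenvalue filtration argument then forces every coefficient to vanish.

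The main obstacle I expect is the spanning step, specifically verifying that the single relation $Q=Q_0$ is both necessary and sufficient to cut the exponent $l$ down to $\{0,1\}$ without introducing inconsistencies (overlap ambiguities in the rewriting system must resolve, i.e.\ the two reduction orders of a critical word like $K_1K_0K_1K_0K_1$ must give the same answer). Since this is precisely the content I can cite from \cite[Theorem 2.2]{17}, I would either reproduce that confluence check or defer to it; the faithfulness half, by contrast, is a clean consequence of the explicit eigenstructure of $\Dsym$ in \eqref{22}.
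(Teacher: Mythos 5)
Your overall strategy is the right one and matches the source: the present paper does not reprove Theorem \ref{12} but quotes it from \cite{17}, and the proof there (echoed in this paper's proof of Theorem \ref{64}, which is explicitly said to be ``in the same spirit'') consists of exactly your two halves --- a rewriting argument for spanning, and linear independence of the images in the basic representation, obtained by applying a putative null combination to the Askey--Wilson polynomials $P_j$ and extracting Laurent polynomial identities in $x=q^j$ from the top coefficients. Your observation that linear independence of the images yields the basis property and faithfulness simultaneously is also the correct way to organize the argument, and it has the pleasant consequence that confluence of the rewriting system is not needed at all (termination suffices for spanning), so the ``main obstacle'' you identify largely evaporates.

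There is, however, a concrete slip in the spanning step. Solving \eqref{1} for $K_0K_1^2$ is the wrong orientation: $K_0K_1^2$ is itself one of the claimed normal forms ($n=1$, $l=0$, $m=2$), so a rule with that leading term rewrites normal forms into non-normal ones. You must instead solve \eqref{1} for $K_1^2K_0$ (and \eqref{2} for $K_1K_0^2$, which you have right); the words containing neither $K_1^2K_0$ nor $K_1K_0^2$ as a subword are precisely $K_0^n(K_1K_0)^lK_1^m$ with $l\ge0$ arbitrary. Relatedly, the claim that the term $(q+q^{-1})K_1K_0K_1$ in \eqref{1} ``collapses any long alternating factor'' so that $l$ is forced into $\{0,1\}$ is false: a purely alternating word $(K_1K_0)^l$ contains no subword reducible by the rules coming from \eqref{1} and \eqref{2}, which is exactly why $AW(3)$ itself, without the Casimir relation, has the larger spanning set with $l$ ranging over all nonnegative integers. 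Only the relation $Q=Q_0$, read via \eqref{3} as a rule with leading term $K_1K_0K_1K_0$, cuts $l$ down to $\{0,1\}$ --- which you do state in your next sentence, so your write-up is internally inconsistent on this point rather than unfixable. With the rule orientations corrected, the proof goes through as you outline it.
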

Note that the anti-algebra isomorphism \eqref{70} induces an anti-algebra
isomorphisms for $AW(3,Q_0)$.

\subsection[The double affine Hecke algebra of type
$(C_1^\vee,C_1)$]{The double af\/f\/ine Hecke algebra of type
$\boldsymbol{(C_1^\vee,C_1)}$}

One of the ways to describe the {\em double affine Hecke algebra} of type
$(C_1^\vee,C_1)$, denoted by $\DAHA$, is as follows
(see \cite[Proposition 5.2]{17}):
\begin{Definition}\rm
$\DAHA$ is the algebra generated by
$T_1$, $Y$, $Y^{-1}$, $Z$, $Z^{-1}$ with relations
$YY^{-1}=1=Y^{-1}Y$, $ZZ^{-1}=1=Z^{-1}Z$ and
\begin{gather}
T_1^2=-(ab+1)T_1-ab,\nonumber\\
T_1Z = Z^{-1}T_1+(ab+1)Z^{-1}-(a+b),\nonumber\\
T_1Z^{-1}=ZT_1-(ab+1)Z^{-1}+(a+b),\nonumber\\
T_1Y=q^{-1}abcd Y^{-1}T_1-(ab+1)Y+ab(1+q^{-1}cd),\nonumber\\
T_1Y^{-1}=q(abcd)^{-1}YT_1+q(abcd)^{-1}(1+ab)Y-q(cd)^{-1}(1+q^{-1}cd),\nonumber\\
YZ=qZY+(1+ab)cd\,Z^{-1}Y^{-1}T_1
-(a+b)cd\,Y^{-1}T_1
-(1+q^{-1}cd)Z^{-1}T_1\nonumber\\
\phantom{YZ=}{}
-(1-q)(1+ab)(1+q^{-1}cd)Z^{-1}
+(c+d)T_1
+(1-q)(a+b)(1+q^{-1}cd),\nonumber\\
YZ^{-1}=q^{-1}Z^{-1}Y
-q^{-2}(1+ab)cd\,Z^{-1}Y^{-1}T_1
+q^{-2}(a+b)cd\,Y^{-1}T_1\nonumber\\
\phantom{YZ^{-1}=}{}
+q^{-1}(1+q^{-1}cd)Z^{-1}T_1
-q^{-1}(c+d)T_1,\nonumber\\
Y^{-1}Z=q^{-1}ZY^{-1}-q(ab)^{-1}(1+ab)Z^{-1}Y^{-1}T_1
+(ab)^{-1}(a+b)Y^{-1}T_1\nonumber\\
\phantom{Y^{-1}Z=}{}
+q(abcd)^{-1}(1+q^{-1}cd)Z^{-1}T_1
+q(abcd)^{-1}(1-q)(1+ab)(1+q^{-1}cd)Z^{-1}\nonumber\\
\phantom{Y^{-1}Z=}{}
-(abcd)^{-1}(c+d)T_1
-(abcd)^{-1}(1-q)(1+ab)(c+d),\nonumber\\
Y^{-1}Z^{-1}=qZ^{-1}Y^{-1}+q(ab)^{-1}(1+ab)Z^{-1}Y^{-1}T_1
-(ab)^{-1}(a+b)Y^{-1}T_1\nonumber\\
\phantom{Y^{-1}Z^{-1}=}{}
-q^2(abcd)^{-1}(1+q^{-1}cd)Z^{-1}T_1
+q(abcd)^{-1}(c+d)T_1.
\label{39}
\end{gather}
\end{Definition}

By adding the relations for $TZ$ and $TZ^{-1}$ and by combining the relations
for $TY$ and $TY^{-1}$ we see that
\[
T_1(Z+Z^{-1})=(Z+Z^{-1})T_1,\qquad
T_1(Y+q^{-1}abcdY^{-1})=(Y+q^{-1}abcdY^{-1})T_1.
\]
\par
For the following theorem see Sahi \cite{9} in the general rank case.
In \cite[Theorem 5.3]{17} it is proved for the rank one case only.

\begin{Theorem}
A basis of $\DAHA$ is provided by the elements $Z^mY^nT_1^i$, where
$m,n\in\ZZ$, $i=0,1$.
\end{Theorem}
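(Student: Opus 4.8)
The plan is to prove this as a Poincar\'e--Birkhoff--Witt--type statement, splitting it into a \emph{spanning} claim and a \emph{linear independence} claim. For spanning I would turn the defining relations \eqref{39} into a terminating rewriting system whose normal form is $Z^mY^nT_1^i$; for independence I would exhibit a representation in which the images of these monomials are linearly independent, the natural candidate being the basic polynomial representation of $\DAHA$ on $\CC[z,z^{-1}]$. Equivalently, one may try to run Bergman's diamond lemma, which delivers both claims simultaneously once all overlap ambiguities are checked.

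For the spanning step, fix the target order ``all $Z$-factors, then all $Y$-factors, then $T_1^i$'', and observe that three groups of relations push every word toward it. The quadratic relation $T_1^2=-(ab+1)T_1-ab$ caps the exponent $i$ at $1$. The four relations for $T_1Z^{\pm1}$ and $T_1Y^{\pm1}$ each have the shape $T_1X=(\text{word in }Z,Y)\,T_1+(\text{correction})$, so they let me transport every occurrence of $T_1$ to the far right. The four relations for $Y^{\pm1}Z^{\pm1}$ each express a product with $Y$ to the left of $Z$ as $q^{\pm1}$ times the reordered product $Z^{\pm1}Y^{\pm1}$ plus correction terms, which lets me move all $Z$-factors to the left of all $Y$-factors. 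Iterating these moves, together with $ZZ^{-1}=Z^{-1}Z=1$ and $YY^{-1}=Y^{-1}Y=1$, reduces an arbitrary word to a linear combination of the monomials $Z^mY^nT_1^i$.

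For linear independence I would pass to the basic polynomial representation, in which $Z$ acts as multiplication by $z$, the reflection hidden in $T_1$ acts through $z\mapsto z^{-1}$, and $Y$ is the Cherednik ($q$-dif\/ference-ref\/lection) operator whose eigenfunctions are the nonsymmetric Askey--Wilson polynomials, with pairwise distinct eigenvalues indexed by $\ZZ$. Assuming a relation $\sum_{m,n,i}c_{m,n,i}Z^mY^nT_1^i=0$ holds as an operator, I would first isolate the $i=0$ and $i=1$ contributions by testing against Laurent polynomials that are, respectively, symmetric and antisymmetric under $z\mapsto z^{-1}$, and then separate the remaining $Z^mY^n$ by comparing the distinct $Y$-eigenvalues (a Vandermonde argument in the eigenvalues $\mu_k$) together with the degree shift by $m$ produced by $Z^m$, read off from the top and bottom powers of $z$. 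This forces every $c_{m,n,i}=0$.

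The main obstacle is the spanning step, specif\/ically termination and conf\/luence of the rewriting. Because the correction terms in \eqref{39} reintroduce the inverse generators $Z^{-1}$, $Y^{-1}$ and an extra $T_1$, one must produce a well-founded complexity measure --- a f\/iltration under which the leading parts of the mixed relations are $qZY$ and $q^{-1}abcd\,Y^{-1}T_1$ while all correction terms are strictly smaller --- so that each rewriting step strictly decreases it, and then check that all overlap ambiguities (for example the words $T_1^2Z$, $T_1YZ$, and $Y^{-1}YZ$) resolve to a common normal form. Once such a measure is in place the process terminates, the diamond lemma applies, and it in fact yields both spanning and independence at once, giving the cleanest route to the stated basis and making the representation-theoretic argument of the previous paragraph only a convenient cross-check.
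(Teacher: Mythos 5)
First, note that the paper does not prove this statement itself: it is quoted from \cite[Theorem 5.3]{17} (rank one) and from Sahi \cite{9} (general rank), so there is no internal proof to compare against. Your overall strategy --- spanning by rewriting words into the normal form $Z^mY^nT_1^i$ via the relations \eqref{39}, then linear independence via the basic representation --- is the standard PBW route and is in the spirit of the proof in \cite{17}. The spanning half is essentially fine, and is in fact easier than you suggest: the right-hand sides of the relations for $T_1Z^{\pm1}$, $T_1Y^{\pm1}$ and $Y^{\pm1}Z^{\pm1}$ are already linear combinations of normal-form monomials, so an induction on word length and on the number of out-of-order pairs closes without the full diamond-lemma apparatus. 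Your diamond-lemma branch, by contrast, is only announced: no reduction order is actually produced and no overlap ambiguity is resolved, so as written it is a plan rather than a proof.

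The genuine gap is in the linear-independence step. First, $T_1$ in the basic representation is not the reflection $z\mapsto z^{-1}$ but a $q$-dif\/ference-ref\/lection operator whose $(-1)$-eigenspace $\Asym^-$ is \emph{not} the space of antisymmetric Laurent polynomials (compare \eqref{72}); on an antisymmetric $f$ the operator $T_1$ acts by multiplication by a nonconstant rational function, so ``testing against antisymmetric polynomials'' does not isolate the $i=1$ part. Second, even with the correct eigenspace decomposition $\FSA=\Asym\oplus\Asym^-$ (which needs $ab\ne1$, an assumption made only from Section 3 onward and not contained in \eqref{6}), writing the putative relation as $A+BT_1=0$ with $A=\sum_{m,n} c_{m,n,0}Z^mY^n$ and $B=\sum_{m,n} c_{m,n,1}Z^mY^n$ only yields $(A-abB)|_{\Asym}=0$ and $(A-B)|_{\Asym^-}=0$: two conditions on two dif\/ferent proper subspaces, which do not combine to $A=B=0$ without further argument. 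Third, the proposed Vandermonde argument in the $Y$-eigenvalues fails because $Z^m$ does not preserve $Y$-eigenspaces, so applying $\sum_m c_{m,n}Z^mY^n$ to an eigenfunction of $Y$ mixes all the $m$'s before any eigenvalue comparison can be made. A workable replacement is to expand each $Z^mY^nT_1^i$ in the basic representation as $f[z]\mapsto\sum_{j,\epsilon} r_{j,\epsilon}(z)\,f[q^jz^{\epsilon}]$ with rational coef\/f\/icients, invoke the linear independence over the rational function f\/ield of the distinct substitution operators $f\mapsto f[q^jz^{\epsilon}]$, and identify the extreme shift contributed by each monomial; this is essentially the argument of \cite{17}.
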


The {\em basic} or {\em polynomial} representation of $\DAHA$
is a representation
of $\DAHA$ on the space $\FSA$ of Laurent polynomials $f[z]$ in $z$ such that
$(Zf)[z]:=z\,f[z]$ and $T_1$ and $Y$ act as $q$-dif\/ference-ref\/lection
operators given by \cite[(3.11), (3.13)]{17}. This representation is
faithful. If $ab\ne1$ then $T_1$ acting on~$\FSA$ has eigenspaces $\Asym$
(for eigenvalue $-ab$) and $\Asym^-$ (for eigenvalue $-1$), and $\FSA$ is
the direct sum of $\Asym$ and $\Asym^-$.

In the basic representation of $\DAHA$ the operator $Y+q^{-1}abcdY^{-1}$
has eigenvalues $\la_n$ (see \eqref{22}),
of multiplicity 2 for $n=1,2,\ldots$
and of multiplicity 1 for $n=0$. If $n\ge1$ and $ab\ne1$
then the eigenspace of $\la_n$
splits as a one-dimensional part in $\Asym$ spanned by the
Askey--Wilson polynomial $P_n[z]$ given by \eqref{25} and a
one-dimensional part in $\Asym^-$ spanned by
\begin{gather}
Q_n[z]:=a^{-1}b^{-1}z^{-1}(1-az)(1-bz)\,P_{n-1}[z;qa,qb,c,d\mid q].
\label{72}
\end{gather}
\par
In \cite[\S~6]{17} the following
algebra $\AWQ$ was def\/ined, which is a central extension
of $AW(3,Q_0)$:

\begin{Definition}\rm
\label{41}
The algebra $\AWQ$ is generated by
$K_0$, $K_1$, $T_1$ such that
$(T_1+ab)(T_1+1)=0$, $T_1$ commutes with $K_0$, $K_1$, and with further relations
\begin{gather}
 (q+q^{-1})K_1K_0K_1-K_1^2K_0-K_0K_1^2
=B\,K_1+ C_0\,K_0+D_0\nonumber\\
\qquad{}+E\,K_1(T_1+ab)+F_0(T_1+ab),
\label{34}\\
(q+q^{-1})K_0K_1K_0-K_0^2K_1-K_1K_0^2
=B\,K_0+C_1\,K_1+D_1\nonumber\\
\qquad{}+E\,K_0(T_1+ab)+F_1(T_1+ab),
\label{35}\\
Q_0=(K_1K_0)^2-(q^2+1+q^{-2})K_0(K_1K_0)K_1
+(q+q^{-1})K_0^2K_1^2\nonumber\\
\qquad{}+(q+q^{-1})(C_0K_0^2+C_1K_1^2)
+\bigl(B+E(T_1+ab)\bigr)
\bigl((q+1+q^{-1})K_0K_1+K_1K_0\bigr)\nonumber\\
\qquad{}+(q+1+q^{-1})\bigl(D_0+F_0(T_1+ab)\bigr)K_0
+(q+1+q^{-1})\bigl(D_1+F_1(T_1+ab)\bigr)K_1\nonumber\\
\qquad{}+
G(T_1+ab).
\label{36}
\end{gather}
Here the structure constants are given by \eqref{16} together with
\begin{gather*}
E :=-q^{-2}(1-q)^3(c+d),\\
F_0 :=q^{-3}(1-q)^3(1+q)(cd+q),\\
F_1 :=q^{-3}(1-q)^3(1+q)(a+b)cd,\\
G :=-q^{-4}(1-q)^3\Bigl((a+b)(c+d)\bigl(cd(q^2+1)+q\bigr)
-q(ab+1)\bigl((c^2+d^2)(q+1)-cd\bigr)\\
\phantom{G :=}{} +(cd+e_4)(q^2+1)+(e_2+e_4-ab)q^3\Bigr),
\end{gather*}
and $Q_0$ is given by \eqref{10}.
\end{Definition}

Then it was proved in \cite[Theorem 6.2, Corollary 6.3]{17}:
\begin{Theorem}
\label{42}
$\AWQ$ has a basis consisting of
\[
K_0^n(K_1K_0)^iK_1^mT_1^j\qquad(m,n=0,1,2,\ldots,\ i,j=0,1).
\]
There is a unique algebra isomorphism from $\AWQ$ into $\DAHA$ such that
$K_0\mapsto Y+q^{-1}abcdY^{-1}$,  $K_1\mapsto Z+Z^{-1}$,
$T_1\mapsto T_1$. The elements in the image commute with $T_1$.
\end{Theorem}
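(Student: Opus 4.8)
The three claims to establish are that the listed monomials span $\AWQ$, that the prescribed assignment extends to an algebra homomorphism $\phi\colon\AWQ\to\DAHA$, and that $\phi$ is injective; from these the basis property and the embedding follow at once. Uniqueness of $\phi$ requires no argument, since $K_0,K_1,T_1$ generate $\AWQ$ and their images are prescribed. My plan is to treat spanning and the construction of $\phi$ by purely algebraic manipulation of the defining relations, and then to derive both the linear independence of the monomials and the injectivity of $\phi$ from a single faithfulness statement in the basic representation of $\DAHA$.

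First I would show that the elements $K_0^n(K_1K_0)^iK_1^mT_1^j$ span $\AWQ$, by the straightening procedure underlying Theorem~\ref{12}. Relations \eqref{34} and \eqref{35} let me rewrite any factor $K_1^2K_0$, $K_0K_1^2$, $K_0^2K_1$ or $K_1K_0^2$ in normally ordered form plus correction terms proportional to $(T_1+ab)$; because $T_1$ is central, these corrections may be pushed aside without disturbing the reordering of the $K_0$'s and $K_1$'s. Iterating reduces any word in $K_0,K_1$ to $K_0^n(K_1K_0)^iK_1^m$ with $i\ge0$, and relation \eqref{36}, the deformed Casimir relation $Q=Q_0$, then serves (exactly as in the proof of Theorem~\ref{12}) to cut the exponent of $K_1K_0$ down to $i\in\{0,1\}$. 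Finally the centrality of $T_1$ moves all its occurrences to the right, and the quadratic $(T_1+ab)(T_1+1)=0$ collapses its powers to $T_1^j$ with $j\in\{0,1\}$, yielding the spanning set.

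Next I would construct $\phi$ via the universal property of a presented algebra: setting $K_0\mapsto Y+q^{-1}abcd\,Y^{-1}$, $K_1\mapsto Z+Z^{-1}$, $T_1\mapsto T_1$, it suffices to check that these images satisfy every relation of Definition~\ref{41}. The identity $(T_1+ab)(T_1+1)=0$ is the first relation of \eqref{39}, and the commutation of $T_1$ with $Y+q^{-1}abcd\,Y^{-1}$ and with $Z+Z^{-1}$ is precisely the pair of identities recorded immediately after the definition of $\DAHA$. The substantial step, and the one I expect to be the main obstacle, is to verify \eqref{34}, \eqref{35} and \eqref{36}: one substitutes the two images into the $q$-commutators on the left, and reduces everything to the basis $Z^mY^nT_1^i$ of $\DAHA$ by repeated use of the nine relations \eqref{39}. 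This is a long but entirely mechanical computation; the role of the $(T_1+ab)$-corrections and of the constants $E,F_0,F_1,G$ is exactly to absorb the residual terms that remain once the undeformed $AW(3)$ part has been matched, so the verification amounts to confirming these constants.

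Finally I would obtain injectivity and linear independence together from the faithful basic representation of $\DAHA$ on $\FSA$, which (for $ab\ne1$) splits as $\FSA=\Asym\oplus\Asym^-$ into the eigenspaces of $T_1$ for the eigenvalues $-ab$ and $-1$, each invariant under $Y+q^{-1}abcd\,Y^{-1}$ and $Z+Z^{-1}$. On $\Asym$ these act as $\Dsym$ and multiplication by $z+z^{-1}$, so $\phi$ restricts there to the basic representation of $AW(3,Q_0)$ and the operators $K_0^n(K_1K_0)^iK_1^m$ are linearly independent by Theorem~\ref{12}; the same independence holds on $\Asym^-$, where $K_0$ is diagonal on the basis $\{Q_n[z]\}$ of \eqref{72}, by the analogous faithfulness. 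Since $T_1^j$ acts by the scalar $(-ab)^j$ on $\Asym$ and by $(-1)^j$ on $\Asym^-$, projecting a vanishing combination $\sum c_{n,i,m,j}K_0^n(K_1K_0)^iK_1^mT_1^j$ onto the two summands gives, for each $(n,i,m)$, the equations $c_{n,i,m,0}-ab\,c_{n,i,m,1}=0$ and $c_{n,i,m,0}-c_{n,i,m,1}=0$, which force all $c_{n,i,m,j}=0$ because $ab\ne1$. Hence the images of the monomials are linearly independent in $\DAHA$; combined with spanning this shows they form a basis of $\AWQ$, and since $\phi$ carries this basis to a linearly independent set it is injective. The image is generated by elements commuting with $T_1$, so it lies in the centralizer of $T_1$, which completes the proof for $ab\ne1$; the excluded case $ab=1$ follows either by continuity in the parameters or by a direct confluence check of the rewriting system used for spanning.
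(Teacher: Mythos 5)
The paper does not actually prove this theorem itself; it quotes it from \cite{17} (Theorem 6.2, Corollary 6.3), and your proposal reconstructs essentially that argument: straightening via the deformed relations \eqref{34}, \eqref{35} together with the Casimir relation \eqref{36} for spanning, verification of the defining relations in $\DAHA$ for existence of the homomorphism, and linear independence extracted from the faithful basic representation through the $T_1$-eigenspace decomposition $\FSA=\Asym\oplus\Asym^-$. The logic is sound and the route is the intended one. Two caveats: the verification that $Y+q^{-1}abcd\,Y^{-1}$ and $Z+Z^{-1}$ satisfy \eqref{34}--\eqref{36} is the real content of the existence claim and cannot be waved off as ``confirming the constants'' --- that computation is where the proof lives; and the ``analogous faithfulness'' on $\Asym^-$ deserves a sentence, e.g.\ that the proof of Theorem~\ref{12} uses only that $K_0$ is diagonal with the distinct eigenvalues $\lambda_n$ and that $K_1$ acts tridiagonally with nonvanishing outer coefficients, both of which hold on $\Asym^-$ in the basis $\{Q_n\}_{n\ge1}$ of \eqref{72}. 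Your closing remark on $ab=1$ is a genuine loose end (continuity does not automatically transfer linear independence to a special parameter value), but it is harmless for the present paper, which assumes $ab\ne1$ from Section~\ref{65} onward.
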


\begin{Remark}\rm
\label{73}
Write $\DAHA=\tilde\goH(Y,Z,T_1;a,b,c,d;q)$
in order to emphasize the dependence
of $\DAHA$ on the generators and the parameters. Then there is an
anti-algebra isomorphism
\begin{gather*}
\tilde\goH(Y,Z,T_1;a,b,c,d;q)\to
\tilde\goH\Biggl(aZ^{-1},(q^{-1}abcd)^{-\half}Y^{-1},T_1;\\
\phantom{\tilde\goH(Y,Z,T_1;a,b,c,d;q)\to}{}
\frac1{(q^{-1}abcd)^\half},
\frac{ab}{(q^{-1}abcd)^\half},\frac{ac}{(q^{-1}abcd)^\half},
\frac{ad}{(q^{-1}abcd)^\half}\,;q\Biggr),
\end{gather*}
see \cite[Proposition 8.5(i)]{11}.
Also write $\AWQ=\widetilde{AW}(3,Q_0;K_0,K_1,T_1;a,b,c,d;q)$. Then, by a
slight adaptation of \eqref{70}, there is an anti-algebra isomorphism
\begin{gather*}
\widetilde{AW}(3,Q_0;K_0,K_1,T_1;a,b,c,d;q)\to
\widetilde{AW}\Biggl(3;qa(bcd)^{-1}Q_0;a K_1,(q^{-1}abcd)^{-\half}K_0;\\
\phantom{\widetilde{AW}(3,Q_0;K_0,K_1,T_1;a,b,c,d;q)\to}{} \frac1{(q^{-1}abcd)^\half},
\frac{ab}{(q^{-1}abcd)^\half},\frac{ac}{(q^{-1}abcd)^\half},
\frac{ad}{(q^{-1}abcd)^\half};q\Biggr).
\end{gather*}
The two anti-algebra isomorphisms are compatible under the algebra embedding
of $\AWQ$ into $\DAHA$ given in Theorem \ref{42}.
\end{Remark}

\section{The spherical subalgebra}
\label{65}
{}From now on assume $ab\ne1$.
In $\DAHA$ put
\[
\Psym:=(1-ab)^{-1}(T_1+1).
\]
Then
\[
\Psym^2=\Psym.
\]
In the basic representation of $\DAHA$ we have for $f\in\FSA$:
\[
\Psym f=
\begin{cases}
f&\mbox{if \ $T_1f=-abf$,}\\
0&\mbox{if \ $T_1f=-f$.}
\end{cases}
\]
$\Psym$ projects $\FSA$ onto $\Asym$.
Def\/ine the linear map $S\colon\DAHA\to\DAHA$ by
\[
S(U):=\Psym U\Psym\qquad(U\in\DAHA).
\]
Then
\[
S(U)\,S(V)=S(U\Psym V)\qquad(U,V\in\DAHA).
\]
Hence the image $S(\DAHA)$
is a subalgebra of $\DAHA$. We call it the {\em spherical subalgebra} of
$\DAHA$.
\par
For $U\in\DAHA$ we have in the basic representation:
\[
S(U)\,f=\Psym U\Psym f=
\begin{cases}
\Psym U\,f&\mbox{if \ $T_1f=-abf$,}\\
0&\mbox{if \ $T_1f=-f$.}
\end{cases}
\]
Hence, for the basic representation of $\DAHA$ restricted to $S(\DAHA)$,
$\Asym$ is an invariant subspace. This representation of $S(\DAHA)$ on
$\Asym$ is faithful.
Indeed, if $S(U)\,f=0$ for all $f\in\Asym$ then $S(U)\,f=0$ for all $f\in\FSA$,
so $S(U)=0$ by the faithfulness of the basic representation of $\DAHA$
on $\FSA$.
\par
$Z_{\DAHA}(T_1)$,
the centralizer of $T_1$ in $\DAHA$, is a subalgebra of $\DAHA$. It has
$\Psym$ as a central element. Hence
\begin{gather}
S(U)=U\Psym\qquad{\rm and}\qquad S(UV)=S(U)\,S(V)
\qquad(U,V\in Z_{\DAHA}(T_1)).
\label{33}
\end{gather}
So $S$ restricted to $Z_{\DAHA}(T_1)$
is an algebra homomorphism.
\par
The algebra $\AWQ$ was def\/ined by Def\/inition \ref{41}.
By Theorem \ref{42}
there is an algeb\-ra isomorphism from $\AWQ$ into $\DAHA$ such that
$K_0\mapsto Y+q^{-1}abcdY^{-1}$,  $K_1\mapsto Z+Z^{-1}$,
\mbox{$T_1\mapsto T_1$}.
This isomorphism embeds $\AWQ$ into $Z_{\DAHA}(T_1)$.
So we may consider $\AWQ$ as a~subalgebra of $Z_{\DAHA}(T_1)$ and
\eqref{33} will hold for $U,V\in \AWQ$.
\par
By \eqref{1}, \eqref{2}, \eqref{9} and \eqref{3},
the algebra $AW(3,Q_0)$ can be presented by the same genera\-tors and relations
as for $\AWQ$ but with additional relation $T_1=-ab$.
\par
By Theorem \ref{42} $\AWQ$ has a basis consisting of the elements
\[
K_0^n(K_1K_0)^iK_1^m(T_1+1),\qquad K_0^n(K_1K_0)^iK_1^m(T_1+ab)
\qquad
(m,n=0,1,2,\ldots,\  \ i=0,1).
\]
Hence $S(\AWQ)$ has a basis consisting of
\[
(1-ab)^{-1}K_0^n(K_1K_0)^iK_1^m(T_1+1)
\qquad(m,n=0,1,2,\ldots,\ \ i=0,1).
\]
By Theorem \ref{12} $AW(3,Q_0)$ has a basis consisting of
\begin{gather}
K_0^n(K_1K_0)^iK_1^m
\qquad(m,n=0,1,2,\ldots,\ \ i=0,1).
\label{57}
\end{gather}
Hence the map which sends a basis element $K_0^n(K_1K_0)^iK_1^m$
of $AW(3,Q_0)$ to a basis element $(1-ab)^{-1}K_0^n(K_1K_0)^iK_1^m(T_1+1)$
of $S(\AWQ)$ extends linearly to a linear bijection from
$AW(3,Q_0)$ onto $S(\AWQ)$. In fact, this map remains well-def\/ined if
we write it as
\begin{gather}
U\mapsto (1-ab)^{-1}\wt U(T_1+1),
\label{32}
\end{gather}
where $U\mapsto \wt U$ sends words $U$ in $AW(3,Q_0)$ to
corresponding words $\wt U$ in $\AWQ$. Moreover, the map \eqref{32}
can then be seen to be an algebra homomorphism.
Indeed, consider the linear map $U\mapsto\wt U$ as a map to $\AWQ$
from the free algebra
generated by $K_0$, $K_1$, $T_1$ with $T_1$ central such that it sends a word
involving $K_0$, $K_1$, $T_1$ to the same word in $\AWQ$.
This map is an algebra homomorphism.
Composing it with $S$ yields the map \eqref{32} which is
again an algebra homomorphism.
Now we have to check
that $R$ is sent to zero by the map \eqref{32}
if $R=0$ is a~relation for $AW(3,Q_0)$.
This is clearly the case for $R:=T_1+ab$, since $(T_1+ab)(T_1+1)=0$.
It is also clear for the other
relations $R=0$ in $AW(3,Q_0)$ since these can be taken as the relations
\eqref{34}--\eqref{36}, which are also relations for $\AWQ$.
So we have shown:
\begin{Proposition}
The map \eqref{32}, where $U\mapsto\wt U$
sends words $U$ involving $K_0,K_1$ in $AW(3,Q_0)$
to the same words $\wt U$ in $\AWQ$, is a well defined algebra isomorphism
from $AW(3,Q_0)$ onto $S(\AWQ)$.
\end{Proposition}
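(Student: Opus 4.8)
The plan is to show that \eqref{32} is simultaneously a well-defined algebra homomorphism and a linear bijection; then a bijective algebra homomorphism is automatically an isomorphism. Both well-definedness and the homomorphism property I would obtain at once by factoring \eqref{32} through the free algebra. Let $\mathcal{F}$ be the free unital algebra on $K_0,K_1,T_1$ with $T_1$ central, and let $\pi\colon\mathcal{F}\to\AWQ$ be the algebra homomorphism sending each generator to the element of the same name; this is exactly the word map $U\mapsto\wt U$ extended to all of $\mathcal{F}$. Since $\pi(\mathcal{F})\subseteq\AWQ\subseteq Z_{\DAHA}(T_1)$ by Theorem \ref{42}, and $S$ restricts to an algebra homomorphism on $Z_{\DAHA}(T_1)$ by \eqref{33}, the composite $S\circ\pi\colon\mathcal{F}\to S(\AWQ)$ is an algebra homomorphism. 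For a word $U$ in $K_0,K_1$ alone, $\pi(U)=\wt U\in Z_{\DAHA}(T_1)$, so by \eqref{33} we get $S(\wt U)=\wt U\,\Psym=(1-ab)^{-1}\wt U(T_1+1)$, which is precisely the value \eqref{32}.

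Next I would descend $S\circ\pi$ to a homomorphism on $AW(3,Q_0)$ by checking that it annihilates the defining relations. Presenting $AW(3,Q_0)$ on $K_0,K_1,T_1$ (with $T_1$ central) as in the text, its defining relations are \eqref{34}--\eqref{36} together with $T_1+ab=0$. The relations \eqref{34}--\eqref{36} already hold in $\AWQ$, so $\pi$, and hence $S\circ\pi$, kills them. For $R:=T_1+ab$ I would compute $S(\pi(R))=(T_1+ab)\,\Psym=(1-ab)^{-1}(T_1+ab)(T_1+1)=0$ in $\AWQ$. Thus $S\circ\pi$ factors through the quotient $AW(3,Q_0)$, yielding a well-defined algebra homomorphism that agrees with \eqref{32} on words in $K_0,K_1$. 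For bijectivity I would invoke Theorem \ref{12}, which gives the basis \eqref{57} of $AW(3,Q_0)$, together with the basis $(1-ab)^{-1}K_0^n(K_1K_0)^iK_1^m(T_1+1)$ of $S(\AWQ)$ coming from Theorem \ref{42}; since \eqref{32} carries each basis element of the source to the matching basis element of the target, it is a linear bijection.

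Because Theorems \ref{12} and \ref{42}, the homomorphism property \eqref{33} of $S$ on the centralizer, and the embedding $\AWQ\hookrightarrow Z_{\DAHA}(T_1)$ are all in hand, no step is a genuine obstacle. The one point requiring care is the relation check: one must recognize that the extra $E,F_0,F_1,G$ terms in \eqref{34}--\eqref{36} are exactly what promotes these identities from relations of $AW(3,Q_0)$ to genuine relations of $\AWQ$, which is what lets the factorization through $\AWQ$ annihilate them cleanly while the single additional relation $T_1+ab=0$ is disposed of by the idempotent identity $(T_1+ab)(T_1+1)=0$.
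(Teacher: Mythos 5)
Your proposal is correct and follows essentially the same route as the paper: it factors the map through the free algebra on $K_0$, $K_1$, $T_1$ (with $T_1$ central), uses that $S$ is an algebra homomorphism on $Z_{\DAHA}(T_1)$, kills the relations \eqref{34}--\eqref{36} and $T_1+ab$ exactly as the paper does, and gets bijectivity by matching the bases from Theorems \ref{12} and \ref{42}. No gaps.
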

\begin{Theorem}
\label{37}
$S(\DAHA)=S(\AWQ)$, so the spherical subalgebra
$S(\DAHA)$ is isomorphic to the algebra $AW(3,Q_0)$ by the
map \eqref{32} sending $AW(3,Q_0)$ to $S(\DAHA)$.
\end{Theorem}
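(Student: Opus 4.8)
The inclusion $S(\AWQ)\subseteq S(\DAHA)$ is immediate from $\AWQ\subseteq\DAHA$, so the whole content is the reverse inclusion $S(\DAHA)\subseteq S(\AWQ)$; once this is shown, equality holds and the stated isomorphism is just the preceding Proposition via \eqref{32}. The plan is to exhibit a manageable spanning set of $S(\DAHA)$ and push each of its elements into $S(\AWQ)$ by two three-term recursions seeded by a few explicit base cases. First I would reduce the spanning set: since $(T_1+1)(T_1+ab)=0$ by the first relation in \eqref{39}, one gets $(T_1+ab)\Psym=0$, i.e.\ $T_1\Psym=-ab\,\Psym=\Psym T_1$. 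Combined with the basis $Z^mY^nT_1^i$ of $\DAHA$ and $S(U)=\Psym U\Psym$, this collapses the factor $T_1^i$ and gives $S(\DAHA)=\mathrm{span}\{S(Z^mY^n):m,n\in\ZZ\}$, where $S(Z^mY^n)=\Psym Z^mY^n\Psym$.

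Next I would set up two clean recursions inside $S(\AWQ)$. Because $K_0=Y+q^{-1}abcd\,Y^{-1}$ and $K_1=Z+Z^{-1}$ lie in $Z_{\DAHA}(T_1)$, on which $\Psym$ is central, the rule $S(U)S(V)=S(U\Psym V)$ yields
\[
S(K_1)\,S(Z^mY^n)=S(Z^{m+1}Y^n)+S(Z^{m-1}Y^n),
\]
\[
S(Z^mY^n)\,S(K_0)=S(Z^mY^{n+1})+q^{-1}abcd\,S(Z^mY^{n-1}),
\]
the first adjusting the $Z$-degree (as $K_1$ sits to the left of $Z^m$), the second the $Y$-degree (as $K_0$ sits to the right of $Y^n$). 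Since $S(K_0),S(K_1)\in S(\AWQ)$ and $q^{-1}abcd\neq0$, each recursion preserves membership in $S(\AWQ)$ and runs in both directions once two consecutive seeds are known. I would then compute the axial base cases directly from the relations for $T_1Z$, $T_1Y$ in \eqref{39} together with $T_1\Psym=-ab\,\Psym$: a short computation gives $S(1)=\Psym$, $S(Z)=(1-ab)^{-1}\bigl(K_1-(a+b)\bigr)\Psym$ and $S(Y)=(1-ab)^{-1}ab\bigl((1+q^{-1}cd)-K_0\bigr)\Psym$, all manifestly in $S(\AWQ)$. Feeding $S(1),S(Z)$ into the first recursion produces the full row $\{S(Z^m)\}\subseteq S(\AWQ)$, and $S(1),S(Y)$ into the second produces the full column $\{S(Y^n)\}\subseteq S(\AWQ)$.

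The interior terms with both indices nonzero cannot be reached from the axes by these two orthogonal recursions alone: to fill the row $n=1$ I need, besides $S(Y)$, the single corner $S(ZY)$, after which the first recursion gives all $S(Z^mY)$, and then running the second recursion up and down from the rows $n=0,1$ yields every $S(Z^mY^n)$. The obstruction is that $S(ZY)$ is coupled to the other three corners $S(ZY^{-1})$, $S(Z^{-1}Y)$, $S(Z^{-1}Y^{-1})$. I would pin them down jointly. The product rule gives $S(K_1)S(K_0)=S(K_1K_0)\in S(\AWQ)$, which (with $Z$ already on the left) expands as
\[
S(K_1)S(K_0)=S(ZY)+q^{-1}abcd\,S(ZY^{-1})+S(Z^{-1}Y)+q^{-1}abcd\,S(Z^{-1}Y^{-1}),
\]
and likewise $S(K_0)S(K_1)=S(K_0K_1)\in S(\AWQ)$ gives a second, independent combination once the factors $YZ^{\pm1}$, $Y^{-1}Z^{\pm1}$ are reordered into $Z^{\pm1}Y^{\pm1}$ form by \eqref{39} and $T_1\Psym=-ab\,\Psym$ is applied. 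Two further relations come from evaluating $\Psym ZY\Psym$ and $\Psym ZY^{-1}\Psym$ directly: these express each corner in terms of its opposite plus already-known axial elements, e.g.\ $S(ZY)\equiv q^{-1}abcd\,S(Z^{-1}Y^{-1})$ modulo $S(\AWQ)$. Solving the resulting small linear system places all four corners in $S(\AWQ)$, completing the sweep and hence $S(\DAHA)\subseteq S(\AWQ)$.

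I expect the genuine obstacle to be exactly this corner step. It is the only place where the noncommutative cross relations between $Y$ and $Z$ in \eqref{39} must be invoked, and the delicate point is verifying that the linear system for the four corners is nondegenerate, so that each corner individually—not merely the symmetric combinations produced by the product rule—lands in $S(\AWQ)$. Tracking the parameter-dependent coefficients through the reordering of $K_0K_1$ is the most error-prone part; everything else is a clean bootstrap from the centrality of $\Psym$ in $Z_{\DAHA}(T_1)$ and the two three-term recursions.
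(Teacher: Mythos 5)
Your architecture is sound and, up to the last step, checks out: the reduction of the spanning set to the elements $S(Z^mY^n)$, the two three-term recursions coming from $S(K_1)S(Z^mY^n)=S(Z^{m+1}Y^n)+S(Z^{m-1}Y^n)$ and $S(Z^mY^n)S(K_0)=S(Z^mY^{n+1})+q^{-1}abcd\,S(Z^mY^{n-1})$, and the axial seeds $S(1)$, $S(Z)$, $S(Y)$ are all correct, and they do reduce the whole problem to the single corner $S(ZY)$. This is a genuine economy compared with the paper's route, which proves a statement for every $(m,n)$ at once (Lemma \ref{62}) via explicit induction formulas in the right ideal $\DAHA(T_1+1)$; your recursions would confine the hard computation to $|m|,|n|\le1$.

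The gap is in the corner step. The only relations among the four sandwiched corners $S(Z^{\pm1}Y^{\pm1})$ that you can extract by moving $(T_1+1)$ through $Z^{\pm1}Y^{\pm1}$ are the left null space of the rank-two matrix expressing these corners in the basis elements $Z^{\pm1}Y^{\pm1}(T_1+1)$; this null space is spanned by $S(ZY)-q^{-1}abcd\,S(Z^{-1}Y^{-1})$ and $(ab+1)S(ZY)+q^{-1}a^2b^2cd\,S(ZY^{-1})+S(Z^{-1}Y)$, each congruent to axial terms (so the second is not of the ``opposite corner'' form you predict). Adjoining the two product equations from $S(K_1K_0)$ and $S(K_0K_1)$ gives a $4\times4$ system whose determinant is a nonzero multiple of $(q^{-1}-q)(ab-1)\bigl(ab(q^{-1}abcd)^2-1\bigr)$. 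The last factor vanishes when $a^3b^3c^2d^2=q^2$, a locus not excluded by \eqref{6} and $ab\ne1$, so there your system is singular and does not pin down $S(ZY)$; the nondegeneracy you flagged as the delicate point genuinely fails. The statement is still true on that locus, and the robust argument is the paper's Step 3: write both the sandwiched corners and $K_1K_0(T_1+1)$, $K_0K_1(T_1+1)$ in the basis $Z^{\pm1}Y^{\pm1}(T_1+1)$ and verify directly that the two $2$-dimensional row spaces coincide, which gives in particular
\[
(T_1+1)ZY(T_1+1)=\frac{1}{1-q^2}\bigl(K_1K_0-qK_0K_1+o(ZY)\bigr)(T_1+1)
\]
and needs only $q^2\ne1$. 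With that replacement for the corner step, your bootstrap goes through and yields $S(\DAHA)\subseteq S(\AWQ)$, hence the theorem via the map \eqref{32}.
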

For the proof note f\/irst that
$\DAHA$ has a basis consisting of the elements
\begin{gather}
Z^mY^n(T_1+1),\qquad Z^mY^n(T_1+ab)\qquad(m,n\in\ZZ).
\label{60}
\end{gather}
Hence
$S(\DAHA)$ is spanned by the elements
\[
(T_1+1)Z^mY^n(T_1+1)\qquad(m,n\in\ZZ).
\]
\begin{Definition}\rm
\label{61}
Let $m,n\in\ZZ$.
For an element in $\DAHA$ which is a linear combination of basis elements
$Z^kY^l$ we say that
\[
\sum_{k,l\in\ZZ}c_{k,l} Z^kY^l=o(Z^mY^n)
\]
if $c_{k,l}\ne0$
implies $|k|\le |m|$, $|l|\le|n|$, $(|k|,|l|)\ne(|m|,|n|)$.
\end{Definition}
Theorem \ref{37} will follow by induction with respect to $|m|+|n|$
from the following lemma:
\begin{Lemma}
\label{62}
Let $m,n\in\ZZ$. Then
\[
(T_1+1)Z^mY^n(T_1+1)\in
(T_1+1)\Bigl(\AWQ+o(Z^mY^n)\Bigr)(T_1+1).
\]
\end{Lemma}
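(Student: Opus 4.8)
The plan is to prove the statement by induction on $|m|+|n|$, the point being that the generators $K_1=Z+Z^{-1}$ and $K_0=Y+q^{-1}abcd\,Y^{-1}$ of $\AWQ$ both commute with $T_1$ (as recorded just after Definition \ref{41}) and hence pass freely through the flanking factors $(T_1+1)$. Thus whenever I can split off a factor of $K_0$ or $K_1$, it lands inside $\AWQ$ while the remaining sandwich has strictly smaller bidegree, so the induction hypothesis applies. The base case $m=n=0$ is immediate since $(T_1+1)\cdot1\cdot(T_1+1)\in(T_1+1)\AWQ(T_1+1)$.

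For the inductive step I would use two reduction identities coming from the defining expressions of $K_0$ and $K_1$. When $|m|\ge2$ I peel $K_1$ off on the left, writing $Z^mY^n=K_1Z^{m-1}Y^n-Z^{m-2}Y^n$ (with the mirror identity for $m\le-2$); here $Y^n$ is a spectator to the right, so no reordering of $Z$ against $Y$ is needed, and since $K_1$ commutes with $T_1$ one has $(T_1+1)K_1Z^{m-1}Y^n(T_1+1)=K_1(T_1+1)Z^{m-1}Y^n(T_1+1)$, to which the hypothesis applies at $(m-1,n)$ and $(m-2,n)$. Symmetrically, when $|n|\ge2$ I peel $K_0$ off on the right via $Z^mY^n=Z^mY^{n-1}K_0-q^{-1}abcd\,Z^mY^{n-2}$, now with $Z^m$ as the spectator on the left. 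Choosing to peel $K_1$ from the left and $K_0$ from the right is exactly what keeps the noncommutativity of $Z$ and $Y$ from ever entering. In each case one checks straight from Definition \ref{61} that $K_1\cdot o(Z^{m-1}Y^n)\subseteq o(Z^mY^n)$ and $o(Z^mY^{n-1})\cdot K_0\subseteq o(Z^mY^n)$, so the error terms stay within the allowed space.

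Iterating these reductions lowers $|m|$ and $|n|$ until both are at most $1$, leaving the finite set of base cases $(m,n)\in\{-1,0,1\}^2$, which I would treat by direct computation. Pushing the left factor $(T_1+1)$ through $Z^mY^n$ by means of the relations $T_1Z=Z^{-1}T_1+(ab+1)Z^{-1}-(a+b)$ and $T_1Y=q^{-1}abcd\,Y^{-1}T_1-(ab+1)Y+ab(1+q^{-1}cd)$ from \eqref{39}, each move replaces $Z$ by $Z^{-1}$ (respectively $Y$ by $Y^{-1}$) and carries the $T_1$ rightward; once $T_1$ reaches the flanking $(T_1+1)$ it collapses by $T_1(T_1+1)=-ab(T_1+1)$. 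For instance this yields $(T_1+1)Z(T_1+1)=\bigl(K_1-(a+b)\bigr)(T_1+1)$ and $(T_1+1)Y(T_1+1)=-ab\bigl(K_0-(1+q^{-1}cd)\bigr)(T_1+1)$, with analogous explicit $\AWQ$-expressions in the reflected and mixed cases; this is precisely the mechanism by which the symmetrizations $Z\leftrightarrow Z^{-1}$ and $Y\leftrightarrow Y^{-1}$ convert single powers into the symmetric generators $K_1,K_0$.

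I expect the main obstacle to be the corner base cases $(m,n)=(\pm1,\pm1)$ together with the bookkeeping needed to certify that every remainder truly lies in $o(Z^mY^n)$. The difficulty is that the intertwining moves produce same-bidegree reflection terms (such as the $Z^{-1}$ in $T_1Z$) as well as terms carrying an extra $T_1$, so one must drive all the $T_1$'s to the right and collapse them before the degree count becomes meaningful; only after this collapse is it visible that the reflected leading terms recombine into an $\AWQ$-element while everything else drops strictly in bidegree. Carrying out this reordering for $(T_1+1)Z^{\pm1}Y^{\pm1}(T_1+1)$, where a $Z$- and a $Y$-intertwining both fire and interact, is the one genuinely computational point. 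Note that the degenerate coupling of $Z$ with $Z^{-1}$ that would appear if one tried to run the $K_1$-recursion at $|m|=1$ is sidestepped exactly by treating these as base cases governed by the asymmetric relation $T_1Z=Z^{-1}T_1+\cdots$, which breaks the $Z\leftrightarrow Z^{-1}$ symmetry and so pins down each sandwich individually. The induction on $|m|+|n|$ then packages everything to give Theorem \ref{37}: each spanning element $(T_1+1)Z^mY^n(T_1+1)$ of $S(\DAHA)$ lies in $(T_1+1)\AWQ(T_1+1)$, whence $S(\DAHA)=S(\AWQ)$.
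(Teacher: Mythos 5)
Your induction is sound and takes a genuinely different route from the paper's proof. The paper runs two parallel explicit computations: it first expresses every sandwich $(T_1+1)Z^{\pm m}Y^{\pm n}(T_1+1)$, and separately every element $K_1^mK_0^n(T_1+1)$ and $K_1^{m-1}K_0K_1K_0^{n-1}(T_1+1)$, as an explicit linear combination of $Z^{\pm m}Y^{\pm n}(T_1+1)$ modulo $o(Z^mY^n)(T_1+1)$, each by its own induction, and then solves the resulting $4\times4$ linear systems (its Steps 1--3). You instead localize all explicit computation to the nine cases $(m,n)\in\{-1,0,1\}^2$ and dispose of everything else by the formal peeling $Z^mY^n=K_1Z^{m-1}Y^n-Z^{m-2}Y^n$ for $m\ge2$ and $Z^mY^n=Z^mY^{n-1}K_0-q^{-1}abcd\,Z^mY^{n-2}$ for $n\ge2$ (with their mirrors for $m\le-2$, $n\le-2$), using that $K_0$, $K_1$ commute with $T_1$ and so pass through the flanking idempotents. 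Your verifications that $K_1\,o(Z^{m-1}Y^n)\subseteq o(Z^mY^n)$ and $o(Z^mY^{n-1})K_0\subseteq o(Z^mY^n)$ are correct, as is your observation that the recursion must not be run at $|m|=1$ or $|n|=1$ (it would not decrease $|m|+|n|$ there). This organization buys a shorter argument with far less bookkeeping of leading coefficients than the paper's, and your explicit base cases $(\pm1,0)$ and $(0,\pm1)$ come out right.

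The gap is that the four corner cases $(m,n)=(\pm1,\pm1)$, which you defer, are not routine bookkeeping: they are the only place the lemma could actually fail, and essentially all of its content sits there. Modulo $o(ZY)(T_1+1)$, each corner sandwich is a vector in the four-dimensional span of the $Z^{\pm1}Y^{\pm1}(T_1+1)$, whereas the elements of $\AWQ$ of the relevant degree contribute only the two vectors coming from $K_1K_0(T_1+1)$ and $K_0K_1(T_1+1)$ (the latter requiring the last four relations of \eqref{39}, which your sketch never invokes). So you must verify that four specific vectors lie in a prescribed two-dimensional subspace --- eight scalar identities in $a$, $b$, $c$, $d$, $q$. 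They do hold: for instance $(T_1+1)ZY(T_1+1)=\bigl(ZY-q^{-1}a^2b^2cd\,Z^{-1}Y^{-1}+o(ZY)\bigr)(T_1+1)=\frac1{1-q^2}\bigl(K_1K_0-qK_0K_1+o(ZY)\bigr)(T_1+1)$, which is exactly the $m=n=1$ instance of the paper's Step 3. But until these four computations are written out, what you have is a correct reduction of the lemma to its essential finite core, not yet a proof of it.
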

\begin{proof}
The procedure will be as follows:
\begin{enumerate}\itemsep=0pt
\item
Write $(T_1+1)Z^mY^n(T_1+1)$ as a linear combination of
\begin{gather}
Z^{|m|}Y^{|n|}(T_1+1),\qquad
Z^{-|m|}Y^{|n|}(T_1+1),\qquad
Z^{|m|}Y^{-|n|}(T_1+1),\nonumber\\
Z^{-|m|}Y^{-|n|}(T_1+1)
\qquad
\pmod{o(Z^{|m|}Y^{|n|})(T_1+1)}.
\label{38}
\end{gather}
This is done by induction, starting with the $\DAHA$ relations for
$T_1Z$, $T_1Z^{-1}$, $T_1Y$, $T_1Y^{-1}$.
\item
Also write $K_1^mK_0^n(T_1+1)$ and
$K_1^{m-1}K_0K_1K_0^{n-1}(T_1+1)$
($m,n=0,1,\ldots$) as a linear combination of \eqref{38}.
\item
These latter linear combinations turn out to
span the linear combinations obtained for
$(T_1+1)Z^mY^n(T_1+1)$.
\end{enumerate}
{\bf Step 1.}
We get the following expressions in terms of the elements \eqref{38}
(here $m,n\in\{1,2,\ldots\}$)
\begin{gather}
(T_1+1)Z^m(T_1+1) =\Bigl(Z^m+Z^{-m}+o(Z^m)\Bigr)(T_1+1),
\label{44}\\
(T_1+1)Z^{-m}(T_1+1) =-ab\Bigl(Z^m+Z^{-m}+o(Z^m)\Bigr)(T_1+1),
\label{45}\\
(T_1+1)Y^n(T_1+1) =-ab\Bigl(Y^n+(q^{-1}abcd)^nY^{-n}+o(Y^n)\Bigr)(T_1+1),
\label{47}\\
(T_1+1)Y^{-n}(T_1+1) =\Bigl((q^{-1}abcd)^{-n}Y^n+Y^{-n}+o(Y^n)\Bigr)(T_1+1),
\label{48}\\
(T_1+1)Z^mY^n(T_1+1) =\Bigl(Z^mY^n-ab(q^{-1}abcd)^nZ^{-m}Y^{-n}
+o(Z^mY^n)\Bigr)(T_1+1),
\label{49}\\
(T_1+1)Z^{-m}Y^n(T_1+1) =\Bigl(-(ab+1)Z^mY^n-ab(q^{-1}abcd)^nZ^mY^{-n}
-ab Z^{-m}Y^n
\nonu\\
 \phantom{(T_1+1)Z^{-m}Y^n(T_1+1) =}{}
+o(Z^mY^n)\Bigr)(T_1+1),
\label{50}\\
(T_1+1)Z^mY^{-n}(T_1+1) =\Bigl(Z^mY^{-n}+(q^{-1}abcd)^{-n}Z^{-m}Y^n
+(1+ab)Z^{-m}Y^{-n}
\nonu\\
 \phantom{(T_1+1)Z^mY^{-n}(T_1+1) =}{}
+o(Z^mY^n)\Bigr)(T_1+1),
\label{51}\\
(T_1+1)Z^{-m}Y^{-n}(T_1+1) =\Bigl((q^{-1}abcd)^{-n}Z^mY^n-abZ^{-m}Y^{-n}\nonumber\\
\phantom{(T_1+1)Z^{-m}Y^{-n}(T_1+1) =}{}
+o(Z^mY^n)\Bigr)(T_1+1).
\label{52}
\end{gather}
{\bf Step 2.}
We get the following expressions in terms of the elements \eqref{38}
(here $m,n\in\{1,2,\ldots\}$)
\begin{gather}
K_1^m(T_1+1) =\Bigl(Z^m+Z^{-m}+o(Z^m)\Bigr)(T_1+1),
\label{53}\\
K_0^n(T_1+1) =\Bigl(Y^n+(q^{-1}abcd)^nY^{-n}+o(Y^n)\Bigr)(T_1+1),
\label{54}\\
K_1^mK_0^n(T_1+1) =\Bigl(Z^mY^n+Z^{-m}Y^n
+(q^{-1}abcd)^n\bigl(Z^m Y^{-n}+Z^{-m}Y^{-n}\bigr)
\nonu\\
\phantom{K_1^mK_0^n(T_1+1) =}{}
+o(Z^mY^n)\Bigr)(T_1+1),
\label{55}\\
K_1^{m-1}K_0K_1K_0^{n-1}(T_1+1) =\Bigl(qZ^mY^n+q^{-1}Z^{-m}Y^n
+q^{-1}(q^{-1}abcd)^nZ^m Y^{-n}
\nonu\\
 \qquad{}
+q^{-1}(q^{-1}abcd)^n(1+ab-q^2ab)Z^{-m}Y^{-n}
+o(Z^mY^n)\Bigr)(T_1+1).
\label{56}
\end{gather}
{\bf Step 3.}
The only cases which may not be immediately clear are for
$(T_1+1)Z^{\pm m}Y^{\pm n}(T_1+1)$ ($m,n\in\{1,2,\ldots\}$).
Then we have by comparing the identities in Step 1 and Step 2:
\begin{gather*}
(T_1+1)Z^mY^n(T_1+1) =
\frac1{(1-q^2)}\Bigl(K_1^{m-1}(K_1K_0-qK_0K_1)K_0^{n-1}+o(Z^mY^n)\Bigr)
(T_1+1),\\
(T_1+1)Z^{-m}Y^n(T_1+1) =
\frac q{(1-q^2)}\Bigl(K_1^{m-1}(-q^{-1}\bigl(1+ab-q^2ab)K_1K_0+
K_0K_1\bigr)K_0^{n-1}\\
 \phantom{(T_1+1)Z^{-m}Y^n(T_1+1) =}{}
+o(Z^mY^n)\Bigr)(T_1+1),\\
(T_1+1)Z^mY^{-n}(T_1+1) =
\frac q{(1-q^2)(q^{-1}abcd)^n}\Bigl(K_1^{m-1}(-qK_1K_0+K_0K_1\bigr)K_0^{n-1}\\
 \phantom{(T_1+1)Z^mY^{-n}(T_1+1) =}{}
+o(Z^mY^n)\Bigr)(T_1+1),\\
(T_1+1)Z^{-m}Y^{-n}(T_1+1) =
\frac 1{(1-q^2)(q^{-1}abcd)^n}\Bigl(K_1^{m-1}(K_1K_0-qK_0K_1\bigr)K_0^{n-1}\\
 \phantom{(T_1+1)Z^{-m}Y^{-n}(T_1+1) =}{}
+o(Z^mY^n)\Bigr)(T_1+1).
\end{gather*}
{\bf Proofs for Step 1.}
We will use repeatedly that $(T_1+1)^2=(1-ab)(T_1+1)$.
{}From the relation for $T_1Z$ in \eqref{39} we obtain
\begin{gather}
(T_1+1)Z=Z^{-1}(T_1+1)+Z+abZ^{-1}-(a+b),
\label{46}\\
(T_1+1)Z(T_1+1)=\bigl(Z+Z^{-1}-(a+b)\bigr)(T_1+1)=
\bigl(Z+Z^{-1}+o(Z)\bigr)(T_1+1),
\nonu
\end{gather}
i.e., \eqref{44} for $m=1$.
Now we will prove \eqref{44} by induction. Suppose it holds for some positive
integer $m$, then we will prove the identity with $m$ replaced by $m+1$.
By successively substitu\-ting~\eqref{46} and the induction hypothesis we have:
\begin{gather*}
(T_1+1)Z^{m+1}(T_1+1)=
Z^{-1}(T_1+1)Z^m(T_1+1)+
\bigl(Z^{m+1}+o(Z^{m+1})\bigr)(T_1+1)\\
\phantom{(T_1+1)Z^{m+1}(T_1+1)}{}=Z^{-1}\bigl(Z^m+Z^{-m}+o(Z^m)\bigr)(T_1+1)+
\bigl(Z^{m+1}+o(Z^{m+1})\bigr)(T_1+1)\\
\phantom{(T_1+1)Z^{m+1}(T_1+1)}{}
=\bigl(Z^{m+1}+Z^{-m-1}+o(Z^{m+1})\bigr)(T_1+1).
\end{gather*}

Similarly, from the relation for $T_1Z^{-1}$ in \eqref{39} we prove
\eqref{45} by induction:
\begin{gather*}
(T_1+1)Z^{-1}=Z(T_1+1)-Z-abZ^{-1}+(a+b),\quad\mbox{hence}\\
(T_1+1)Z^{-1}(T_1+1)=\bigl(-ab(Z+Z^{-1})+(a+b)\bigr)(T_1+1)\\
\phantom{(T_1+1)Z^{-1}(T_1+1)}{}
=-ab\bigl(Z+Z^{-1}+o(Z)\bigr)(T_1+1),\\
(T_1+1)Z^{-m-1}(T_1+1)=
Z(T_1+1)Z^{-m}(T_1+1)-ab
\bigl(Z^{-m-1}+o(Z^{m+1})\bigr)(T_1+1)\\
\qquad{}=-abZ\bigl(Z^m+Z^{-m}+o(Z^m)\bigr)(T_1+1)-ab
\bigl(Z^{-m-1}+o(Z^{m+1})\bigr)(T_1+1)\\
\qquad=-ab\bigl(Z^{m+1}+Z^{-m-1}+o(Z^{m+1})\bigr)(T_1+1).
\end{gather*}
The proofs of \eqref{47}, \eqref{48} are similar.

The proof of \eqref{49} is for f\/ixed $n$ by induction with respect to $m$.
First we prove the case $m=1$. By \eqref{46} and \eqref{47} we obtain:
\begin{gather*}
(T_1+1)ZY^n(T_1+1)
=\Bigl(Z^{-1}(T_1+1)Y^n+ZY^n+abZ^{-1}Y^n+o(ZY^n)\Bigr)(T_1+1)\\
\phantom{(T_1+1)ZY^n(T_1+1)}{}=\Bigl(ZY^n-ab(q^{-1}abcd)^nZ^{-1}Y^{-n}
+o(ZY^n)\Bigr)(T_1+1).
\end{gather*}
Now suppose \eqref{49} holds for some positive
integer $m$, then we will prove the identity with $m$ replaced by $m+1$.
By \eqref{46} and the induction hypothesis we obtain:
\begin{gather*}
(T_1+1)Z^{m+1}Y^n(T_1+1)
=\Bigl(Z^{-1}(T_1+1)Z^mY^n+Z^{m+1}Y^n+o(Z^{m+1}Y^n)\Bigr)(T_1+1)\\
\phantom{(T_1+1)Z^{m+1}Y^n(T_1+1)}{}=\Bigl(Z^{m+1}Y^n\!-ab(q^{-1}abcd)^nZ^{-m-1}Y^{-n}\!
+o(Z^{m+1}Y^n)\Bigr)(T_1+1).
\end{gather*}
The proofs of \eqref{50}, \eqref{51}, \eqref{52} are similar.

\medskip

\noindent
{\bf Proofs for Step 2.}
Formulas \eqref{53}, \eqref{54}, \eqref{55} immediately follow by the
substitutions $K_0=Y+q^{-1}abcdY^{-1}$, $K_1=Z+Z^{-1}$.
As for \eqref{56} we f\/irst verify it for $m=n=1$ by
writing the \LHS\ as $(1-ab)^{-1}(T_1+1)(Y+q^{-1}abcdY^{-1})(Z+Z^{-1})(T_1+1)$
and by using the last four relations in \eqref{39}. Then we obtain for
the case of general $m$, $n$:
\begin{gather*}
K_1^{m-1}K_0K_1K_0^{n-1}(T_1+1)
=(1-ab)^{-1}K_1^{m-1}K_0K_1(T_1+1)K_0^{n-1}(T_1+1)\\
\qquad{}=(Z+Z^{-1})^{m-1}\Bigl(qZY+q^{-1}Z^{-1}Y
+q^{-1}(q^{-1}abcd)Z Y^{-1}\\
\qquad\qquad{}+q^{-1}(q^{-1}abcd)(1+ab-q^2ab)Z^{-1}Y^{-1}
+o(ZY)\Bigr)(Y+q^{-1}abcdY^{-1})^{n-1}(T_1+1)\\
\qquad{}=\Bigl(qZ^mY^n+q^{-1}Z^{-m}Y^n
+q^{-1}(q^{-1}abcd)^nZ^m Y^{-n}\\
\qquad\qquad{}
+q^{-1}(q^{-1}abcd)^n(1+ab-q^2ab)Z^{-m}Y^{-n}
+o(Z^mY^n)\Bigr)(T_1+1).\tag*{\qed}
\end{gather*}
\renewcommand{\qed}{}
\end{proof}

\section{The antispherical subalgebra}
\label{66}
In $\DAHA$ put
\[
\Psym^-:=(ab-1)^{-1}(T_1+ab).
\]
Then
\[
(\Psym^-)^2=\Psym^-.
\]
In the basic representation of $\DAHA$ we have for $f\in\FSA$:
\[
\Psym^- f=
\begin{cases}
f&\mbox{if \ $T_1f=-f$,}\\
0&\mbox{if \ $T_1f=-abf$.}
\end{cases}
\]
Let $\Asym^-$ denote the eigenspace of $T_1$ acting on $\FSA$
for eigenvalue $-1$. Then
$\Psym^-$ projects $\FSA$ onto~$\Asym^-$.
Def\/ine the linear map $S^-\colon\DAHA\to\DAHA$ by
\[
S^-(U):=\Psym^- U\Psym^-\qquad(U\in\DAHA).
\]
Then
\[
S^-(U)\,S^-(V)=S^-(U\Psym^- V)\qquad(U,V\in\DAHA).
\]
Hence the image $S^-(\DAHA)$
is a subalgebra of $\DAHA$. We call it the {\em antispherical subalgebra} of
$\DAHA$.
\par
For $U\in\DAHA$ we have in the basic representation:
\[
S^-(U)\,f=\Psym^- U\Psym^- f=
\begin{cases}
\Psym^- U\,f&\mbox{if \ $T_1f=-f$,}\\
0&\mbox{if \ $T_1f=-abf$.}
\end{cases}
\]
Hence, for the basic representation of $\DAHA$ restricted to $S^-(\DAHA)$,
$\Asym^-$ is an invariant subspace.
\par
Recall the algebra isomorphism from $\AWQ$ into $Z_{\DAHA}(T_1)$ given
by Theorem \ref{42}.
Since $S^-(U)\,S^-(V)=S^-(UV)$ for $U,V\in Z_{\DAHA}(T_1)$, we see that
$\AWQ$, considered as a subalgabra of $\DAHA$ and hence of
$Z_{\DAHA}(T_1)$, is mapped by $S^-$
onto a subalgebra $S^-(\AWQ)$ of $S^-(\DAHA)$.
{}From the basis \eqref{57} of $\AWQ$ we see that $S^-(\AWQ)$ has basis
\[
(ab-1)^{-1}K_0^n(K_1K_0)^iK_1^m(T_1+ab)
\qquad(m,n=0,1,2,\ldots,\  i=0,1).
\]
Compare this basis with the basis \eqref{57} of $AW(3,Q_0)$.
Thus the map which sends a basis element $K_0^n(K_1K_0)^iK_1^m$
of $AW(3,Q_0)$ to a basis element
$q^{n+i}(ab-1)^{-1}K_0^n(K_1K_0)^iK_1^m(T_1+ab)$
of $S^-(\AWQ)$ extends linearly to a linear bijection from
$AW(3,Q_0)$ onto $S^-(\AWQ)$. In fact this map extends to an
algebra isomorphism:
\begin{Proposition}
\label{69}
Let $AW(3,Q_0;qa,qb,c,d)$ be $AW(3,Q_0)$ with $a$, $b$ replaced by
$qa$, $qb$, respectively. Then there is a well-defined
algebra isomorphism from
$AW(3,Q_0;qa,qb,c,d)$ onto $S^-(\AWQ)$ given by the map
\begin{gather}
U\mapsto (ab-1)^{-1}\wt U(T_1+ab),
\label{58}
\end{gather}
where $U\mapsto \wt U$ sends words $U$ in $AW(3,Q_0;qa,qb,c,d)$
involving $K_0$, $K_1$ to
corresponding words~$\wt U$ in $\AWQ$ involving $qK_0$, $K_1$.
\end{Proposition}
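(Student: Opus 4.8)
The plan is to mirror, for the antispherical case, the argument that established the previous Proposition for $S(\AWQ)$. Recall that there the key structural facts were: (i) the explicit basis of $\AWQ$ from Theorem~\ref{42}; (ii) the multiplicativity $S(UV)=S(U)\,S(V)$ valid because $\Psym$ is central in $Z_{\DAHA}(T_1)$ and $\AWQ$ embeds in $Z_{\DAHA}(T_1)$; and (iii) the fact that the defining relations of $AW(3,Q_0)$, when multiplied by the idempotent, reduce to relations already holding in $\AWQ$. The same three ingredients are available here with $\Psym$ replaced by $\Psym^-=(ab-1)^{-1}(T_1+ab)$, which is again central in $Z_{\DAHA}(T_1)$, so that $S^-(UV)=S^-(U)\,S^-(V)$ for $U,V\in\AWQ$. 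The only genuinely new feature is the parameter shift $a,b\mapsto qa,qb$ and the accompanying twist $K_0\mapsto qK_0$ together with the factor $q^{n+i}$ in the bijection of bases.

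First I would make the map \eqref{58} well defined as a homomorphism by the same free-algebra device used before: view $U\mapsto\wt U$ as the homomorphism from the free algebra on $K_0,K_1,T_1$ (with $T_1$ central) into $\AWQ$ that sends the generators to $qK_0,K_1,T_1$, and then compose with $S^-$. Since $\Psym^-$ is idempotent and central in the relevant subalgebra, this composite is automatically an algebra homomorphism from the free algebra onto $S^-(\AWQ)$, and it sends $U$ to $(ab-1)^{-1}\wt U(T_1+ab)$. It then remains to verify that every defining relation $R=0$ of $AW(3,Q_0;qa,qb,c,d)$ is annihilated by this composite. The relation $T_1+ab=0$ of $AW(3,Q_0)$ (in its presentation with central $T_1$) is handled immediately because $(T_1+1)(T_1+ab)=0$ forces the image of $T_1+1$ to vanish against $(T_1+ab)$; more precisely the relation that must die here is $T_1+1=0$, and indeed $(T_1+1)(T_1+ab)=0$ in $\DAHA$.

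The crux is therefore to check the two cubic Zhedanov relations \eqref{1}, \eqref{2} and the Casimir relation $Q=Q_0$, now with parameters $qa,qb,c,d$ and with $K_0$ replaced by $qK_0$. The strategy is to compare these against the relations \eqref{34}--\eqref{36} defining $\AWQ$ after right-multiplication by $(T_1+ab)$. On the support of $\Psym^-$ one has $T_1=-1$, so each occurrence of the correction term $(T_1+ab)$ in \eqref{34}--\eqref{36} becomes the scalar $(ab-1)$; thus multiplying \eqref{34}--\eqref{36} on the right by $(T_1+ab)$ collapses them into pure Zhedanov-type relations with modified structure constants. The main obstacle I expect is the bookkeeping verification that these modified constants are exactly the structure constants \eqref{16} and \eqref{10} evaluated at $(qa,qb,c,d)$, after absorbing the rescaling $K_0\mapsto qK_0$ (which multiplies $C_0$ by $q^2$, rescales the $B,D_0,D_1$ terms appropriately, and introduces the $q^{n+i}$ factors seen in the basis bijection). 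Concretely, I would substitute $a\mapsto qa$, $b\mapsto qb$ into $B,C_0,C_1,D_0,D_1,E,F_0,F_1,G$ and into $Q_0$, track how the powers of $q$ from $K_0\mapsto qK_0$ redistribute, and confirm term-by-term that the identity holds; this is the same type of explicit but lengthy computation that underlies the Step~1/Step~2 matching in Lemma~\ref{62}, and is where essentially all the work lies. Once the constants match, the composite kills every relation of $AW(3,Q_0;qa,qb,c,d)$, so it descends to a well-defined surjective homomorphism, and since it carries the basis \eqref{57} bijectively onto the displayed basis of $S^-(\AWQ)$ it is an isomorphism.
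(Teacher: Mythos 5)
Your proposal follows essentially the same route as the paper's proof: the free-algebra device composed with $S^-$, killing $T_1+1$ via $(T_1+1)(T_1+ab)=0$, rewriting \eqref{34}--\eqref{36} so the $(T_1+ab)$ correction terms become $(T_1+1)$ terms plus the scalar $(ab-1)$ (which die against $(T_1+ab)$), and then matching the resulting modified structure constants with those of $AW(3,Q_0;qa,qb,c,d)$ under $K_0\mapsto qK_0$. The term-by-term verification of the constants that you defer is exactly the remaining content of the paper's proof (which records the outcome as $\wt U_1=(1-ab)^{-1}qR_1$, $\wt U_2=(1-ab)^{-1}q^2R_2$, $\wt U_3=(1-ab)^{-1}q^2R_3$), so your outline is correct and not a different argument.
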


\begin{proof}
Consider the linear map $U\mapsto\wt U$ as a map to $\AWQ$
from the free algebra
generated by $K_0$, $K_1$, $T_1$ with $T_1$ central such that it sends a word
involving $K_0$, $K_1$, $T_1$ to the corresponding  word
involving $qK_0$, $K_1$, $T_1$
in $\AWQ$. This map is an algebra homomorphism.
Composing it with $S^-$ yields the map \eqref{58} which is
again an algebra homomorphism.
Now we have to check
that $R$ is sent to zero by the map \eqref{58}
if $R=0$ is a relation for $AW(3,Q_0;qa,qb,c,d)$.
This is clearly the case for $R:=T_1+1$, since $(T_1+1)(T_1+ab)=0$.
To see this for the other relations,
rewrite relations \eqref{34}--\eqref{36} for $\AWQ$ as:
\begin{gather*}
(q+q^{-1})K_1K_0K_1-K_1^2K_0-K_0K_1^2
-\bigl(B+(ab-1)E\bigr)K_1- C_0\,K_0-\bigl(D_0+(ab-1)F_0\bigr)
\\
\qquad{}
-(E\,K_1+F_0)(T_1+1)=0,
\\
(q+q^{-1})K_0K_1K_0-K_0^2K_1-K_1K_0^2
-\bigl(B+(ab-1)E\bigr)K_0-C_1\,K_1-\bigl(D_1+(ab-1)F_1\bigr)
\\
\qquad{}
-(E\,K_0+F_1)(T_1+1)=0,
\\
K_1K_0K_1K_0-(q^2+1+q^{-2})K_0K_1K_0K_1
+(q+q^{-1})K_0^2K_1^2-((q+q^{-1})(C_0K_0^2+C_1K_1^2)
\\
\qquad{}+\bigl(B+(ab-1)E\bigr)
\bigl((q+1+q^{-1})K_0K_1\!+K_1K_0\bigr)\!
+(q+1+q^{-1})\bigl(D_0\!+(ab-1)F_0\bigr)K_0\!
\\
\qquad{}+(q+1+q^{-1})\bigl(D_1+(ab-1)F_1\bigr)K_1+(ab-1)G-Q_0+
(E\,K_1K_0+G)(T_1+1)
\\
\quad+(q+1+q^{-1})(E\,K_0K_1
+F_0\,K_0+F_1\,K_1)(T_1+1)=0.
\end{gather*}
On multiplication with $T_1+ab$ we see that the identities
$R_i(T_1+ab)=0$ ($i=1,2,3$)
must be valid in $S^-(\AWQ)$, where
\begin{gather*}
R_1 :=(q+q^{-1})K_1K_0K_1-K_1^2K_0-K_0K_1^2
-\bigl(B+(ab-1)E\bigr)K_1\\
\phantom{R_1 :=}{} - C_0\,K_0-D_0-(ab-1)F_0,
\\
R_2 :=(q+q^{-1})K_0K_1K_0-K_0^2K_1-K_1K_0^2
-(B+(ab-1)E\bigr)K_0\\
\phantom{R_2 :=}{}-C_1\,K_1-D_1-(ab-1)F_1,
\\
R_3 :=K_1K_0K_1K_0-(q^2+1+q^{-2})K_0K_1K_0K_1
+(q+q^{-1})K_0^2K_1^2\\
\phantom{R_3 :=}{} -((q+q^{-1})(C_0K_0^2+C_1K_1^2)
+\bigl(B+(ab-1)E\bigr)
\bigl((q+1+q^{-1})K_0K_1+K_1K_0\bigr)\\
\phantom{R_3 :=}{}
+(q+1+q^{-1})\bigl(D_0+(ab-1)F_0\bigr)K_0
+(q+1+q^{-1})\bigl(D_1+(ab-1)F_1\bigr)K_1\\
\phantom{R_3 :=}{}
+(ab-1)G-Q_0.
\end{gather*}
Now consider relations \eqref{1}, \eqref{2} en \eqref{9} for
$AW(3,Q_0;qa,qb,c,d)$ (so with $a$, $b$ in the structure constants replaced by
$qa$, $qb$).
These can be written in the form $U_i=0$ ($i=1,2,3$) by bringing everything
to the \LHS\ in the relations. Now consider $U_i$ as elements of the free
algebra generated by $K_0$, $K_1$. For the images under the map \eqref{58}
we then obtain the following elements of $S^-(\AWQ)$:{\samepage
\[
\wt U_1=(1-ab)^{-1} qR_1,\qquad
\wt U_2=(1-ab)^{-1} q^2R_2,\qquad
\wt U_3=(1-ab)^{-1} q^2R_3,
\]
which are all zero.}
\end{proof}

\begin{Theorem}
\label{59}
$S^-(\DAHA)=S(\AWQ)$, so the antispherical subalgebra
$S^-(\DAHA)$ is isomorphic to the algebra $AW(3,Q_0;qa,qb,c,d)$ by the
map \eqref{58} sending $AW(3,Q_0;qa,qb,c,d)$ to $S^-(\DAHA)$.
\end{Theorem}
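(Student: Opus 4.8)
The statement to prove is $S^-(\DAHA)=S^-(\AWQ)$, which (combined with Proposition \ref{69}) gives the asserted isomorphism with $AW(3,Q_0;qa,qb,c,d)$. The plan is to mirror exactly the proof of Theorem \ref{37} from the spherical case, replacing the idempotent $\Psym$ by $\Psym^-$ throughout. Since $S^-(\AWQ)\subseteq S^-(\DAHA)$ is immediate from the definitions, the content is the reverse inclusion. As in the spherical case, the basis \eqref{60} of $\DAHA$ shows that $S^-(\DAHA)$ is spanned by the elements $(T_1+ab)Z^mY^n(T_1+ab)$ for $m,n\in\ZZ$, so it suffices to show each such element lies in $S^-(\AWQ)$.

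The key step is to prove the $S^-$-analogue of Lemma \ref{62}: namely that
\[
(T_1+ab)Z^mY^n(T_1+ab)\in
(T_1+ab)\Bigl(\AWQ+o(Z^mY^n)\Bigr)(T_1+ab),
\]
after which the theorem follows by induction on $|m|+|n|$ exactly as Theorem \ref{37} followed from Lemma \ref{62}. First I would run the analogue of Step 1, deriving from the $\DAHA$ relations for $T_1Z$, $T_1Z^{-1}$, $T_1Y$, $T_1Y^{-1}$ the four ``reduction'' identities expressing $(T_1+ab)Z^{\pm m}Y^{\pm n}(T_1+ab)$ modulo $o(Z^{|m|}Y^{|n|})(T_1+ab)$ as linear combinations of $Z^{\pm|m|}Y^{\pm|n|}(T_1+ab)$. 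Here I would use the identity $(T_1+ab)^2=(ab-1)(T_1+ab)$ in place of $(T_1+1)^2=(1-ab)(T_1+1)$, and a rewriting such as $(T_1+ab)Z=Z^{-1}(T_1+ab)+ab\,Z+Z^{-1}-ab(a+b)$ coming from the same $T_1Z$ relation. Then I would carry out the analogue of Step 2, computing $K_1^mK_0^n(T_1+ab)$ and $K_1^{m-1}K_0K_1K_0^{n-1}(T_1+ab)$ via the substitutions $K_0\mapsto Y+q^{-1}abcd\,Y^{-1}$, $K_1\mapsto Z+Z^{-1}$, sandwiched against $(T_1+ab)$.

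The crux, as in Step 3 of Lemma \ref{62}, is to check that the four expressions produced in Step 2 span the same four-dimensional space (modulo $o(Z^{|m|}Y^{|n|})$) as the monomials $(T_1+ab)Z^{\pm m}Y^{\pm n}(T_1+ab)$ from Step 1; equivalently, that the $4\times4$ coefficient matrix relating the two sets is invertible. I expect this linear-algebra verification to be the main obstacle, since the coefficients differ from the spherical case (the eigenvalue pairing and the factors of $ab$ versus $1$ are interchanged), so the explicit constants in the analogues of \eqref{49}--\eqref{52} and \eqref{55}--\eqref{56} must be recomputed rather than copied. Once the matrix is seen to be nonsingular for generic parameters under \eqref{6}, one solves for $(T_1+ab)Z^{\pm m}Y^{\pm n}(T_1+ab)$ in terms of the $K$-expressions of Step 2, yielding the desired membership in $(T_1+ab)\AWQ(T_1+ab)=S^-(\AWQ)$ modulo lower-order terms, and the induction closes the argument.
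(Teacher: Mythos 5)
Your plan follows the paper's own proof essentially verbatim: the paper likewise reduces Theorem \ref{59} to the analogue of Lemma \ref{62} (its Lemma \ref{71}), proved by the same three steps with $T_1+1$ replaced by $T_1+ab$ and $(T_1+ab)^2=(ab-1)(T_1+ab)$, followed by the inversion of the $4\times4$ leading-coefficient relation and induction on $|m|+|n|$. The only blemish is the sample rewriting of $(T_1+ab)Z$, whose constant term should be $-(a+b)$ rather than $-ab(a+b)$; this sits in the $o(Z)$ part and does not affect the argument.
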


The proof is analogous to the proof of Theorem \ref{37}.
Since
$\DAHA$ has a basis \eqref{60},
$S^-(\DAHA)$ is spanned by the elements
\[
(T_1+ab)Z^mY^n(T_1+ab)\qquad(m,n\in\ZZ).
\]
Recall Def\/inition \ref{61}.
Theorem \ref{59} will follow by induction with respect to $|m|+|n|$
from the following analogue of Lemma \ref{62}:
\begin{Lemma}
\label{71}
Let $m,n\in\ZZ$. Then
\[
(T_1+ab)Z^mY^n(T_1+ab)\in
(T_1+ab)\Bigl(\AWQ+o(Z^mY^n)\Bigr)(T_1+ab).
\]
\end{Lemma}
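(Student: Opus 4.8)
The plan is to imitate, line by line, the three-step proof of Lemma~\ref{62}, systematically replacing the factor $(T_1+1)$ by $(T_1+ab)$ and the idempotency relation $(T_1+1)^2=(1-ab)(T_1+1)$ by $(T_1+ab)^2=(ab-1)(T_1+ab)$, both of which follow from $T_1^2=-(ab+1)T_1-ab$. The formal skeleton of the argument (Steps 1--3, followed by the induction on $|m|+|n|$ that deduces Theorem~\ref{59}) is unchanged; what changes are the structure constants appearing in the intermediate identities, and tracking these changes is the whole content of the proof.

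First I would redo \textbf{Step 1}, producing the antispherical analogues of \eqref{44}--\eqref{52}. These come from multiplying the $\DAHA$ relations \eqref{39} for $T_1Z^{\pm1}$, $T_1Y^{\pm1}$ on the left by $(T_1+ab)$ and pushing the reflection through. For example, the relation for $T_1Z$ gives $(T_1+ab)Z=Z^{-1}(T_1+ab)+abZ+Z^{-1}-(a+b)$, and after absorbing $(T_1+ab)^2=(ab-1)(T_1+ab)$ this yields
\[
(T_1+ab)Z(T_1+ab)=ab\bigl(Z+Z^{-1}+o(Z)\bigr)(T_1+ab),
\]
whereas the relation for $T_1Z^{-1}$ gives $(T_1+ab)Z^{-1}(T_1+ab)=-\bigl(Z+Z^{-1}+o(Z)\bigr)(T_1+ab)$. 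Compared with \eqref{44}, \eqref{45} the overall scalars $1$ and $ab$ have been interchanged, and the same interchange propagates through the $Y$-relations \eqref{47}, \eqref{48} (where the factors $(q^{-1}abcd)^{\pm n}$ survive unchanged) and through the mixed corners \eqref{49}--\eqref{52}.

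Next comes \textbf{Step 2}. The identities \eqref{53}--\eqref{55} carry over verbatim with $(T_1+1)$ replaced by $(T_1+ab)$, since their derivation only substitutes $K_0=Y+q^{-1}abcd\,Y^{-1}$, $K_1=Z+Z^{-1}$ and reads off the already-sorted product $Z^{\pm}$-then-$Y^{\pm}$, a step not involving $T_1$. The identity \eqref{56}, however, genuinely changes: its derivation reorders $K_0K_1=(Y+q^{-1}abcd\,Y^{-1})(Z+Z^{-1})$ by the four $YZ$-relations in \eqref{39}, which carry explicit $T_1$-terms, and a summand $Z^kY^lT_1$ now meets $(T_1+ab)$ via $T_1(T_1+ab)=-(T_1+ab)$ instead of meeting $(T_1+1)$ via $T_1(T_1+1)=-ab(T_1+1)$. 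Thus each $T_1$-generated coefficient is multiplied by $-1$ rather than by $-ab$; a sample computation at the $Z^{-1}Y^{-1}$ corner turns the spherical coefficient $q^{-2}abcd(1+ab-q^2ab)$ into $q^{-2}cd(1+ab-q^2)$. \textbf{Step 3} then matches the swapped corner combinations of Step~1 against the partly changed $\AWQ$-expansions of Step~2 and solves for each $(T_1+ab)Z^{\pm m}Y^{\pm n}(T_1+ab)$. It is exactly the combined effect of the two modifications --- the $1\leftrightarrow ab$ swap in Step~1 and the replacement of $-ab$ by $-1$ in Step~2 --- that produces structure constants with $a$, $b$ shifted to $qa$, $qb$, in agreement with the map \eqref{58} of Proposition~\ref{69} and, conceptually, with the fact that $\Asym^-$ carries the Askey--Wilson polynomials of parameters $qa$, $qb$, $c$, $d$ via \eqref{72}.

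The step I expect to be the main obstacle is the solvability in Step~3. The two $\AWQ$-words in a fixed corner span only a two-dimensional space of leading monomial combinations, so one must check both that all four Step-1 combinations $(T_1+ab)Z^{\pm m}Y^{\pm n}(T_1+ab)$ fall inside that span and that the resulting $2\times2$ system is non-degenerate; the relevant determinant is the antispherical analogue of the factor $1-q^2$ occurring in the spherical Step~3. Granting this, inverting the system gives the four explicit formulas for $(T_1+ab)Z^{\pm m}Y^{\pm n}(T_1+ab)$ modulo $o(Z^mY^n)(T_1+ab)$, which is the assertion of the lemma; the determinant is nonzero under the standing hypotheses \eqref{6} together with $ab\ne1$, since the shifted parameters $qa$, $qb$, $c$, $d$ still satisfy the non-degeneracy condition $q^2abcd\ne q^{-m}$.
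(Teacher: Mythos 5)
Your proposal follows exactly the paper's own route: the paper proves Lemma \ref{71} by rerunning the three-step procedure of Lemma \ref{62} with $(T_1+1)$ replaced by $(T_1+ab)$, and merely lists the resulting Step 1--3 formulas, whose coefficients agree with the changes you predict (the $1\leftrightarrow ab$ swap in Step 1, the unchanged identities \eqref{53}--\eqref{55}, and the modified $Z^{-m}Y^{-n}$ coefficient $(qab)^{-1}(q^{-1}abcd)^n(1+ab-q^2)$ in the analogue of \eqref{56}, matching your $q^{-2}cd(1+ab-q^2)$ at $m=n=1$). The solvability issue you flag in Step 3 resolves as in the spherical case, the relevant determinant again being $1-q^2$ up to the nonzero factors $ab$ and $(q^{-1}abcd)^n$.
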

\begin{proof}
We use the same procedure as in the proof of Lemma \ref{62}. I will only
list the main formulas in the three steps. The reader can verify these
formulas in an analogous way as in the proof of Lemma \ref{62}.
\medskip

\noindent
{\bf Step 1}
\begin{gather*}
(T_1+ab)Z^m(T_1+ab) =ab\Bigl(Z^m+Z^{-m}+o(Z^m)\Bigr)(T_1+ab),
\\
(T_1+ab)Z^{-m}(T_1+ab) =-\Bigl(Z^m+Z^{-m}+o(Z^m)\Bigr)(T_1+ab),
\\
(T_1+ab)Y^n(T_1+ab) =-\Bigl(Y^n+(q^{-1}abcd)^nY^{-n}+o(Y^n)\Bigr)(T_1+ab),
\\
(T_1+ab)Y^{-n}(T_1+ab) =ab\Bigl((q^{-1}abcd)^{-n}Y^n+Y^{-n}+o(Y^n)\Bigr)
(T_1+ab),
\\
(T_1+ab)Z^mY^n(T_1+ab) =\Bigl(abZ^mY^n-(q^{-1}abcd)^nZ^{-m}Y^{-n}
+o(Z^mY^n)\Bigr)(T_1+ab),
\\
(T_1+ab)Z^{-m}Y^n(T_1+ab) =\Bigl(-(ab+1)Z^mY^n-(q^{-1}abcd)^nZ^mY^{-n}
-Z^{-m}Y^n
\\
 \phantom{(T_1+ab)Z^{-m}Y^n(T_1+ab) =}{}
+o(Z^mY^n)\Bigr)(T_1+ab),
\\
(T_1+ab)Z^mY^{-n}(T_1+ab) =\Bigl(ab(q^{-1}abcd)^{-n}Z^{-m}Y^n+abZ^mY^{-n}
+(1+ab)Z^{-m}Y^{-n}
\\
 \phantom{(T_1+ab)Z^mY^{-n}(T_1+ab) =}{}
+o(Z^mY^n)\Bigr)(T_1+ab),
\\
(T_1+ab)Z^{-m}Y^{-n}(T_1+ab) =\Bigl(ab(q^{-1}abcd)^{-n}Z^mY^n-Z^{-m}Y^{-n}
+o(Z^mY^n)\Bigr)(T_1+ab).
\end{gather*}
{\bf Step 2}
\begin{gather*}
K_1^m(T_1+ab) =\Bigl(Z^m+Z^{-m}+o(Z^m)\Bigr)(T_1+ab),
\\
K_0^n(T_1+ab) =\Bigl(Y^n+(q^{-1}abcd)^nY^{-n}+o(Y^n)\Bigr)(T_1+ab),
\\
K_1^mK_0^n(T_1+ab) =\Bigl(Z^mY^n+Z^{-m}Y^n
+(q^{-1}abcd)^n\bigl(Z^m Y^{-n}+Z^{-m}Y^{-n}\bigr)
\\
 \phantom{K_1^mK_0^n(T_1+ab) =}{}
+o(Z^mY^n)\Bigr)(T_1+ab),
\\
K_1^{m-1}K_0K_1K_0^{n-1}(T_1+ab) =\Bigl(qZ^mY^n+q^{-1}Z^{-m}Y^n
+q^{-1}(q^{-1}abcd)^nZ^m Y^{-n}
\\
 \qquad{}{}
+(qab)^{-1}(q^{-1}abcd)^n(1+ab-q^2)Z^{-m}Y^{-n}
+o(Z^mY^n)\Bigr)(T_1+ab).
\end{gather*}
{\bf Step 3}
\begin{gather*}
(T_1+ab)Z^mY^n(T_1+ab)=
\frac{ab}{1-q^2}\Bigl(K_1^{m-1}(K_1K_0-qK_0K_1)K_0^{n-1}+o(Z^mY^n)\Bigr)
(T_1+ab),\\
(T_1+ab)Z^{-m}Y^n(T_1+ab)=
\frac 1{1-q^2}\Bigl(K_1^{m-1}(-\bigl(1+ab-q^2)K_1K_0+
qabK_0K_1\bigr)K_0^{n-1}\\
\phantom{(T_1+ab)Z^{-m}Y^n(T_1+ab)=}{}
+o(Z^mY^n)\Bigr)(T_1+ab),\\
(T_1+ab)Z^mY^{-n}(T_1+ab)=
\frac{qab}{(1-q^2)(q^{-1}abcd)^n}
\Bigl(K_1^{m-1}(-qK_1K_0+K_0K_1\bigr)K_0^{n-1}\\
\phantom{(T_1+ab)Z^mY^{-n}(T_1+ab)=}{}
+o(Z^mY^n)\Bigr)(T_1+ab),\\
(T_1+ab)Z^{-m}Y^{-n}(T_1+ab)=
\frac {ab}{(1-q^2)(q^{-1}abcd)^n}
\Bigl(K_1^{m-1}(K_1K_0-qK_0K_1\bigr)K_0^{n-1}\\
\phantom{(T_1+ab)Z^{-m}Y^{-n}(T_1+ab)=}{}
+o(Z^mY^n)\Bigr)(T_1+ab).
\tag*{\qed}
\end{gather*}
\renewcommand{\qed}{}
\end{proof}

\begin{Remark}\rm
The same $q$-shift for the parameters as in Proposition \ref{69} occurs
in \cite[(3.17)]{17} (the eigenfunction of $Y+q^{-1}abcdY^{-1}$ in
$\Asym^-$ expressed in terms of Askey--Wilson polynomials). Of course,
these two results are very much related to each other.

Let $\DAHA(qa,qb,c,d)$ be $\DAHA$ with parameters
$a$, $b$, $c$, $d$ replaced by $qa$, $qb$, $c$, $d$, respectively.
If we compare Theorems \ref{37} and \ref{59} then we can conclude
that the spherical subalgebras $S^-(\DAHA)$ and
$S(\DAHA(qa,qb,c,d))$
are isomorphic. This result is an analogue
of the result in \cite[Proposition 4.11]{19} for Cherednik algebras.
\end{Remark}

\begin{Remark}\rm
\label{74}
Just as we had in Step 1 of the proofs of Lemma \ref{62} and Lemma \ref{71},
we can derive from the f\/irst, fourth and f\/ifth relation in \eqref{39} that
\begin{gather*}
(T_1+1)(Y+q^{-1}a^2b^2cdY^{-1}-(q^{-1}abcd+ab))(T_1+1)=0,\\
(T_1+ab)(Y+q^{-1}cdY^{-1}-(q^{-1}cd+1))(T_1+ab)=0.
\end{gather*}
It follows that, in the basic representation of $\DAHA$, the operator
\[
D^-:=Y+q^{-1}a^2b^2cdY^{-1}-(q^{-1}abcd+ab)
\]
maps $\Asym$ into $\Asym^-$, while
\[
D^+:=Y+q^{-1}cdY^{-1}-(q^{-1}cd+1)
\]
maps $\Asym^-$ into $\Asym$. Since both operators preserve the eigenspace
of $\la_n$, we obtain
\begin{gather*}
D^-\,P_n =ab(q^{-1}cd-q^{-n})(q^n-1)\,Q_n,\\
D^+\,Q_n =(q^{-1}cd-q^{-n}a^{-1}b^{-1})(q^nab-1)\,P_n,
\end{gather*}
where the Askey--Wilson polynomial $P_n$ and
the shifted Askey--Wilson polynomial $Q_n$ are given by \eqref{25} and \eqref{72}, respectively, and the constant factors in the above identities
follow by comparing coef\/f\/icients of $z^n$. Thus the operators $D^-$ and $D^+$
can be considered as {\em shift operators}.
\par
The observations in this Remark were earlier made (in dif\/ferent notation)
in \cite[Lemma 12.2, Proposition 12.3]{11}.
\end{Remark}

\section{Centralizers and centers}
\label{67}
As a corollary of Theorem \ref{37} and Theorem \ref{59} we obtain:
\begin{Theorem}
The centralizer $Z_{\DAHA}(T_1)$ is equal to $\AWQ$.
\end{Theorem}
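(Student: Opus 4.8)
The plan is to prove the nontrivial inclusion $Z_{\DAHA}(T_1)\subseteq\AWQ$; the reverse inclusion $\AWQ\subseteq Z_{\DAHA}(T_1)$ is already part of Theorem~\ref{42}, whose image in $\DAHA$ consists of elements commuting with $T_1$. The structural fact I would exploit is that $\Psym=(1-ab)^{-1}(T_1+1)$ and $\Psym^-=(ab-1)^{-1}(T_1+ab)$ form a pair of complementary orthogonal idempotents. Indeed, using only the quadratic relation $T_1^2=-(ab+1)T_1-ab$ one checks $\Psym+\Psym^-=1$ and $\Psym\Psym^-=\Psym^-\Psym=0$, and each of them, being a polynomial in $T_1$, lies in $\AWQ$. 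Consequently $S(\AWQ)=\Psym\,\AWQ\,\Psym\subseteq\AWQ$, and likewise $S^-(\AWQ)\subseteq\AWQ$.

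Now take $U\in Z_{\DAHA}(T_1)$. Since $U$ commutes with $T_1$, it commutes with $\Psym$ and $\Psym^-$, so $U=U(\Psym+\Psym^-)=U\Psym+U\Psym^-$, and by \eqref{33} together with its antispherical counterpart (valid because $\Psym^-$ is central in $Z_{\DAHA}(T_1)$) the two summands are exactly $U\Psym=S(U)$ and $U\Psym^-=S^-(U)$. Theorem~\ref{37} now gives $S(U)\in S(\DAHA)=S(\AWQ)\subseteq\AWQ$, and Theorem~\ref{59} gives $S^-(U)\in S^-(\DAHA)=S^-(\AWQ)\subseteq\AWQ$. Adding, $U=U\Psym+U\Psym^-\in\AWQ$, which is the required inclusion.

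I expect no serious obstacle, since the real work is already carried out in Theorems~\ref{37} and~\ref{59}. The whole content of the statement is that the centralizer of $T_1$ splits, through the orthogonal idempotents $\Psym$ and $\Psym^-$, into its spherical and antispherical parts, each of which those two theorems identify with the corresponding piece of $\AWQ$. The only items needing a line of verification are that $\Psym,\Psym^-$ are indeed complementary orthogonal idempotents and that $S^-(U)=U\Psym^-$ on $Z_{\DAHA}(T_1)$; both are immediate from the quadratic relation for $T_1$ and the centrality of polynomials in $T_1$ within the centralizer.
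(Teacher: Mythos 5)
Your proposal is correct and is essentially the paper's own argument: the paper's decomposition $U=(1-ab)^{-1}U(T_1+1)+(ab-1)^{-1}U(T_1+ab)$ is exactly your $U=U\Psym+U\Psym^-$, and both proofs then use commutation with $T_1$ to identify the summands with $S(U)$ and $S^-(U)$ and invoke Theorems~\ref{37} and~\ref{59}. No differences worth noting.
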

\begin{proof}
Write $U\in\DAHA$ as
\begin{gather}
U=(1-ab)^{-1}U(T_1+1)+(ab-1)^{-1}U(T_1+ab).
\label{63}
\end{gather}
Suppose that $U\in Z_{\DAHA}(T_1)$. Then
\begin{gather*}
U(T_1+1) =(1-ab)^{-1}(T_1+1)U(T_1+1),\\
U(T_1+ab) =(ab-1)^{-1}(T_1+ab)U(T_1+ab).
\end{gather*}
So $U(T_1+1)\in S(\DAHA)=S(\AWQ){\subset}\AWQ$ and
$U(T_1+ab)\in S^-(\DAHA)=S^-(\AWQ)$%
$\subset\AWQ$.
\end{proof}

The following theorem is interesting in its own right, but it can also
be used, in combination with Theorems \ref{37} and \ref{59}, in order to
show that the center of $\DAHA$ consists of the scalars (see Theorem~\ref{68}).
The proof is in the same spirit as the proof of the faithfulness of
the basic representation of $AW(3,Q_0)$ (see \cite[Theorem 2.2]{17}).
\begin{Theorem}
\label{64}
The center of the algebra $AW(3,Q_0)$ consists of the scalars.
\end{Theorem}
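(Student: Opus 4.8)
The plan is to use the explicit basis of $AW(3,Q_0)$ given in Theorem \ref{12}, namely the elements $K_0^n(K_1K_0)^iK_1^m$ with $m,n\ge0$ and $i\in\{0,1\}$, and to exploit the faithfulness of the basic representation \eqref{7} on $\Asym$. Suppose $Z$ lies in the center. Since $Z$ commutes with $K_1$, which acts as multiplication by $z+z^{-1}$, and with $K_0=\Dsym$, it suffices to understand the constraints these two commutation relations impose. I would work inside the faithful basic representation, so that $Z$ is realized as a concrete operator on $\Asym$ commuting with both multiplication by $z+z^{-1}$ and with $\Dsym$.

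First I would use commutation with $K_0=\Dsym$. Because $\Dsym$ has the simple eigenvalues $\la_n=q^{-n}+abcdq^{n-1}$ (distinct under \eqref{6}), with one-dimensional eigenspaces spanned by the Askey--Wilson polynomials $P_n$, any operator commuting with $\Dsym$ must preserve each eigenline and hence act on $\Asym$ as a diagonal operator in the Askey--Wilson basis: $Z\,P_n=\mu_n P_n$ for scalars $\mu_n$. The key step is then to bring in commutation with $K_1$ (multiplication by $z+z^{-1}$). Applying $Z$ to the three-term recurrence $(z+z^{-1})P_n=P_{n+1}+\be_n P_n+\ga_n P_{n-1}$ and comparing with the recurrence for $(z+z^{-1})(Z P_n)$ forces $\mu_{n+1}=\mu_n=\mu_{n-1}$ wherever $\ga_n\ne0$, since $\ga_n\neq0$ for $n\ge1$ under \eqref{6}. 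This propagates a single scalar $\mu$ across all $n$, giving $Z\,P_n=\mu P_n$ for every $n$, i.e.\ $Z$ acts as the scalar $\mu$ on all of $\Asym$.

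Finally, since the basic representation of $AW(3,Q_0)$ on $\Asym$ is faithful by Theorem \ref{12}, the operator $Z-\mu I$ is represented by zero and therefore equals zero in $AW(3,Q_0)$. Hence $Z=\mu\cdot1$ is a scalar, proving that the center consists only of scalars. The main obstacle, and the place where condition \eqref{6} is essential, is verifying that the recurrence comparison genuinely forces all the $\mu_n$ to coincide: one must check that the off-diagonal matching of coefficients in the expansion of $(z+z^{-1})(ZP_n)$ versus $Z((z+z^{-1})P_n)$ yields $(\mu_{n+1}-\mu_n)=0$ and $(\mu_{n-1}-\mu_n)\ga_n=0$ with $\ga_n\ne0$, so that no nontrivial diagonal operator other than a scalar can commute with both generators.
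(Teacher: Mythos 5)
Your argument is correct, but it takes a genuinely different route from the paper's. The paper expands the central element $U$ in the explicit basis \eqref{14} coming from Theorem \ref{12} and performs a two-stage coefficient analysis: first, letting $U(Z+Z^{-1})-(Z+Z^{-1})U$ act on $P_j[z]$ and extracting the coefficient of $P_{j+m+1}[z]$ produces a Laurent-polynomial identity in $x=q^{j+m}$ whose extreme coefficients $x^{\pm n}$ force all terms involving a positive power of $K_0$ to vanish; second, applying the commutator with $\Dsym$ to the constant polynomial $1$ kills the remaining positive powers of $K_1$. You instead argue representation-theoretically: since $\Dsym$ has simple spectrum on $\Asym$ (distinct eigenvalues $\la_n$ under \eqref{6}, with one-dimensional eigenspaces spanned by the $P_n$), any operator commuting with $\Dsym$ is diagonal in the Askey--Wilson basis, and commutation with multiplication by $z+z^{-1}$ combined with the monic three-term recurrence forces all diagonal entries $\mu_n$ to coincide. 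This is shorter and avoids the basis expansion entirely; both proofs rest on the same two inputs, namely the faithfulness of the basic representation (Theorem \ref{12}) and the distinctness of the $\la_n$. Two small points. First, your claim that $\ga_n\ne0$ for $n\ge1$ under \eqref{6} is not justified: \eqref{6} does not exclude, for instance, $ab=q^{1-n}$, which makes the factor $1-abq^{n-1}$ in $\ga_n$ vanish. Fortunately you do not need it --- comparing the coefficients of $P_{n+1}$, which enters the monic recurrence with coefficient $1$, already gives $\mu_{n+1}=\mu_n$ for all $n\ge0$, and this alone propagates the single scalar. Second, purely cosmetically, avoid denoting the central element by $Z$, since $Z$ is already the standard generator of $\DAHA$ in this paper.
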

\begin{proof}
Let $U$ be in the center of $AW(3,Q_0)$.
Because of Theorem \ref{12} we may consider $AW(3,Q_0)$ in its
faithful basic representation on $\Asym$. Then $U$ can be uniquely
expanded in terms of the basis of $AW(3,Q_0)$ in this representation:
\begin{gather}
U=\sum_{k,l} a_{k,l} \Dsym^l (Z+Z^{-1})^k+
\sum_{k,l} b_{k,l} \Dsym^{l-1} (Z+Z^{-1}) \Dsym (Z+Z^{-1})^{k-1}.
\label{14}
\end{gather}
Since $U$ is in the center, we have
\begin{gather}
U(Z+Z^{-1})-(Z+Z^{-1})U=0.
\label{15}
\end{gather}
We will f\/irst show that if $U$ given by \eqref{14}
satisf\/ies \eqref{15}, then all coef\/f\/icients $a_{k,l}$ and $b_{k,l}$
in~\eqref{14}
vanish except possibly for coef\/f\/icients $a_{k,l}$ with $l=0$.
Indeed, suppose that this is not the case. Then there is a highest value
$m$ of $k$ for which $a_{k,l}\ne0$ or $b_{k,l}\ne0$ for some $l\ge 1$.
All terms in~\eqref{14} with $k>m$ then will certainly commute with
$Z+Z^{-1}$, so we may assume that the terms in~\eqref{14} with $k>m$ vanish
while \eqref{15} still holds. Let both sides of~\eqref{15} act on the
Askey--Wilson polynomial $P_j[z]$:
\[
(U(Z+Z^{-1})-(Z+Z^{-1})U)P_j[z]=0.
\]
Expand the \LHS\ of the above equation in terms of Askey--Wilson polynomials
$P_i[z]$. Then the highest occurring term will be for $i=j+m+1$, so the
coef\/f\/icient of $P_{j+m+1}[z]$ in this expansion must be zero. This gives
\begin{gather}
\sum_l (a_{m,l}\la_{j+m+1}^l+b_{m,l}\la_{j+m+1}^{l-1}\la_{j+m})
-\sum_l (a_{m,l} \la_{j+m}^l+b_{m,l} \la_{j+m}^{l-1}\la_{j+m-1})=0.
\label{13}
\end{gather}
We have, writing $x:=q^{j+m}$ and $u:=q^{-1}abcd$,
\[
\la_{j+m+1}=q^{-1}x^{-1}+qux,\qquad
\la_{j+m}=x^{-1}+ux,\qquad
\la_{j+m-1}=qx^{-1}+q^{-1}ux.
\]
We can consider the identity \eqref{13} as an
identity for Laurent polynomials in $x$.
Since the left-hand side vanishes
for inf\/initely many values of $x$,
it must be identically zero.
Let $n$ be the maximal $l>0$ for which $a_{m,l}\ne0$ or $b_{m,l}\ne0$.
Then, in particular, the coef\/f\/icients of~$x^{-n}$ and~$x^n$
in the
left-hand side of \eqref{13} must be zero.
This gives explicitly:
\[
a_{m,n}u^n(1-q^n)+q^{-1}b_{m,n}u^n(1-q^n)=0,\qquad
a_{m,n}(1-q^{-n})+q b_{m,n}(1-q^{-n})=0.
\]
Now $n>0$, so $q^{\pm n}\ne1$. Also $u\ne0$. Hence,
\[
a_{m,n}+q^{-1}b_{m,n}=0,\qquad
a_{m,n}+qb_{m,n}=0.
\]
Thus $a_{m,n}=0=b_{m,n}$, which is a contradiction.
\par
So $U$ in the center will have the form
$U=\sum_k a_k (Z+Z^{-1})^k$
in the basic representation of $AW(3,Q_0)$. We have to show that $a_k=0$
for $k>0$. Suppose not. Then there is a highest value $m>0$ of $k$ for
which $a_k\ne0$. Then we have
\[
U\Dsym(1)-\Dsym\,U(1)=0.
\]
Expand the \LHS\ of the above equation in terms of Askey--Wilson polynomials
$P_i[z]$. Then the coef\/f\/icient of $P_m[z]$ will be zero.
So $a_m\la_0-a_m\la_m=0$. Since $\la_m\ne \la_0$ if $m\ne0$, we conclude
that $a_m=0$, a contradiction.
\end{proof}
\begin{Theorem}
\label{68}
The center $Z(\DAHA)$ of $\DAHA$ consists of the scalars.
\end{Theorem}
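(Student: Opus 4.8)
The plan is to reduce the statement to the triviality of the center of $AW(3,Q_0)$, which is already available as Theorem~\ref{64}, by splitting a central element along the two idempotents $\Psym$ and $\Psym^-$. So let $U\in Z(\DAHA)$. Since $U$ commutes with every element of $\DAHA$, it commutes in particular with $T_1$, so by the theorem $Z_{\DAHA}(T_1)=\AWQ$ established just above we have $U\in\AWQ$; moreover, as $\AWQ\subset\DAHA$, the element $U$ is then \emph{central} in $\AWQ$ itself. Because $\Psym+\Psym^-=1$, I would decompose $U=U\Psym+U\Psym^-=S(U)+S^-(U)$, exactly as in \eqref{63}.

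Next I would transport this centrality through the two idempotent projections. By \eqref{33} the map $S$ restricted to $Z_{\DAHA}(T_1)=\AWQ$ is a surjective algebra homomorphism onto $S(\AWQ)=S(\DAHA)$, and the antispherical section records the analogous fact $S^-(U)\,S^-(V)=S^-(UV)$ for $S^-$. A surjective homomorphism carries a central element to a central element of the image, so $S(U)$ is central in $S(\DAHA)$ and $S^-(U)$ is central in $S^-(\DAHA)$. Now Theorem~\ref{37} identifies $S(\DAHA)$ with $AW(3,Q_0)$ via \eqref{32}, under which the unit $1$ goes to $\Psym$; hence Theorem~\ref{64} gives that the center of $S(\DAHA)$ equals $\CC\,\Psym$, forcing $S(U)=\la\,\Psym$ for some scalar $\la$. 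The identical argument using Theorem~\ref{59} and Theorem~\ref{64} applied with the $q$-shifted parameters $(qa,qb,c,d)$ yields $S^-(U)=\mu\,\Psym^-$ for some scalar $\mu$.

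Combining the two gives $U=\la\,\Psym+\mu\,\Psym^-=\mu+(\la-\mu)\Psym$. The remaining, and only genuinely essential, step is to force $\la=\mu$. If instead $\la\neq\mu$, then $\Psym=(1-ab)^{-1}(T_1+1)$, and therefore $T_1$, would be central in $\DAHA$; but the relation for $T_1Z$ in \eqref{39} gives $T_1Z-ZT_1=(Z^{-1}-Z)T_1+(ab+1)Z^{-1}-(a+b)$, which is nonzero by the linear independence of the basis $Z^mY^nT_1^i$. This contradiction forces $\la=\mu$, so $U=\la\cdot1$ is a scalar. I do not expect a real obstacle here: the centrality of $U$ does all the work once it is transported to the spherical and antispherical subalgebras, and the two hard inputs---triviality of the centers of $AW(3,Q_0)$ and of its parameter shift---are supplied verbatim by Theorem~\ref{64}.
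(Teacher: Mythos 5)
Your proposal is correct and takes essentially the same route as the paper: reduce to $U\in Z_{\DAHA}(T_1)=\AWQ$, split $U$ via \eqref{63} into its $\Psym$ and $\Psym^-$ components, transport centrality through $S$ and $S^-$, and invoke Theorems \ref{37}, \ref{59} and \ref{64}. The one point where you go beyond the paper's (terser) proof is the explicit final step showing $\la=\mu$ --- i.e.\ that $S(U)=\la\Psym$ and $S^-(U)=\mu\Psym^-$ only give $U=\mu+(\la-\mu)\Psym$, and one must still rule out $\Psym$ being central --- which the paper leaves implicit; your argument for it (nonvanishing of $T_1Z-ZT_1$ read off from \eqref{39} in the basis $Z^mY^nT_1^i$) is sound and a worthwhile completion.
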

\begin{proof}
Let $U\in Z(\DAHA)$.
Then $U\in Z_{\DAHA}(T_1)=\AWQ$.
So
$U\in Z(\AWQ)$.
Write $U$ as in~\eqref{63}. We have to show that $U(T_1+1)$ and $U(T_1+ab)$
are scalars. This follows from
\begin{gather*}
 U(T_1+1)=(1-ab)^{-1}S(U)\in Z(S(\DAHA)),\\
 U(T_1+ab)=(ab-1)^{-1}S^-(U)\in Z(S^-(\DAHA)).
\end{gather*}
Now use the algebra isomorphisms from Theorems \ref{37} and \ref{59},
and apply Theorem \ref{64}.
\end{proof}

\subsection*{Acknowledgements}
I thank Jasper Stokman for suggesting me that the
spherical subalgebra of the Askey--Wilson DAHA is related to Zhedanov's algebra.
I thank a referee for suggestions which led to inclusion of Remarks
\ref{75}, \ref{73} and \ref{74}.
Some of the results presented here were obtained during the
workshop {\em Applications of Macdonald Polynomials}, September 9--14, 2007
at the Banf\/f International Research Station (BIRS).
I thank the organizers for
inviting me.

\pdfbookmark[1]{References}{ref}
\LastPageEnding
\end{document}